\def\eqref#1{equation~\ref{#1}}
\def\ceil#1{\lceil #1 \rceil}
\def\1{\bm{1}}
\DeclareMathAlphabet{\mathsfit}{\encodingdefault}{\sfdefault}{m}{sl}
\SetMathAlphabet{\mathsfit}{bold}{\encodingdefault}{\sfdefault}{bx}{n}
\newcommand{\R}{\mathbb{R}}
\newcommand{\KL}{D_{\mathrm{KL}}}
\DeclareMathOperator*{\argmax}{arg\,max}
\DeclareMathOperator*{\argmin}{arg\,min}
\newtcolorbox{thmbox}{colback=cyan!5,colframe=white,top=2pt,bottom=2pt,left=0pt,right=2pt}
\newtcolorbox{questionbox}{colback=red!5!white,colframe=white}
\renewcommand{\KL}[2]{\mathrm{KL}\left(#1 || #2\right)}
\newcommand{\LSE}[1]{\mathrm{LSE}\left(#1\right)}
\theoremstyle{plain}
\newtheorem{theorem}{Theorem}
\newtheorem{proposition}{Proposition}
\theoremstyle{definition}
\newtheorem{definition}{Definition}
\theoremstyle{remark}
\newtheorem{remark}{Remark}
\icmltitlerunning{A Sinkhorn-type algorithm for inequality-constrained optimal transport}
\begin{document}

\twocolumn[
\icmltitle{A Sinkhorn-type Algorithm for Constrained Optimal Transport
}

\begin{icmlauthorlist}
\icmlauthor{Xun Tang}{yyy}
\icmlauthor{Holakou Rahmanian}{comp}
\icmlauthor{Michael Shavlovsky}{comp}
\icmlauthor{Kiran Koshy Thekumparampil}{comp}
\icmlauthor{Tesi Xiao}{comp}
\icmlauthor{Lexing Ying}{yyy}

\icmlaffiliation{yyy}{Stanford Univerisity}
\icmlaffiliation{comp}{Amazon}

\icmlcorrespondingauthor{Xun Tang}{xuntang@stanford.edu}
\icmlcorrespondingauthor{Lexing Ying}{lexing@stanford.edu}

\end{icmlauthorlist}

\icmlkeywords{Optimal Transport, Sinkhorn Algorithm, Applied Mathematics, Information Retrieval}
\vskip 0.3in

]

\printAffiliationsAndNotice{}

\begin{abstract}
    Entropic optimal transport (OT) and the Sinkhorn algorithm have made it practical for machine learning practitioners to perform the fundamental task of calculating transport distance between statistical distributions. In this work, we focus on a general class of OT problems under a combination of equality and inequality constraints. We derive the corresponding entropy regularization formulation and introduce a Sinkhorn-type algorithm for such constrained OT problems supported by theoretical guarantees. We first bound the approximation error when solving the problem through entropic regularization, which reduces exponentially with the increase of the regularization parameter. Furthermore, we prove a sublinear first-order convergence rate of the proposed Sinkhorn-type algorithm in the dual space by characterizing the optimization procedure with a Lyapunov function. To achieve fast and higher-order convergence under weak entropy regularization, we augment the Sinkhorn-type algorithm with dynamic regularization scheduling and second-order acceleration. Overall, this work systematically combines recent theoretical and numerical advances in entropic optimal transport with the constrained case, allowing practitioners to derive approximate transport plans in complex scenarios.
\end{abstract}

\section{Introduction}
\label{sec: intro}
Obtaining the optimal transport (OT) \citep{villani2009optimal, linial1998deterministic,peyre2017computational} plan between statistical distributions is an important subroutine in machine learning \citep{sandler2011nonnegative,jitkrittum2016interpretable,arjovsky2017wasserstein,salimans2018improving,genevay2018learning,chen2020graph,fatras2021unbalanced}. In this work, we focus on an optimal transportation problem with a combination of inequality and equality constraints. A typical example is an OT problem with one inequality constraint:
\begin{equation}\label{eqn: constrained OT example}
    \min_{
\substack{P: P\1=r, P^{\top}\1 = c, P \geq 0}}
  C\cdot P, \, \text{such that \(D \cdot P \geq t\)},
\end{equation}
where \(\cdot\) stand for entry-wise inner product, \(C \in \mathbb{R}^{n \times n}\) is the cost matrix, \(t \in \R, D \in \mathbb{R}^{n \times n}\) encode the inequality constraint, and \(c,r \in \mathbb{R}^{n}\) are respectively the source and target density. As an illustration, in Figure \ref{fig:1D_transport_evolution}, we plot the optimal transport plan between 1D distributions when the main cost is the transport cost induced by the \(l_1\) Manhattan distance, but an inequality constraint is placed on the transport cost induced by Euclidean distance. One can see that solving inequality-constrained OT problems in \eqref{eqn: constrained OT example} allows one to obtain transport maps under more complex geometric structures.

For the unconstrained OT problem, the most important recent breakthrough is the introduction of entropic regularization and the resultant Sinkhorn algorithm \citep{yule1912methods, sinkhorn1964relationship, cuturi2013sinkhorn}. With simple matrix scaling steps, the Sinkhorn algorithm gives an approximate OT solution in near-linear time \citep{altschuler2017near}, which fuels the wide adoption in the machine learning community. This prompts the following natural question:
\begin{questionbox}
\begin{center}
    Is there an extension of the Sinkhorn algorithm to optimal transport problems with constraints?
\end{center}
\end{questionbox}

This paper answers the above question in the affirmative. The key insight underlying the development in this work is that the entropic optimal transport problem is an instance of entropic linear programming \citep{fang1992unconstrained}, from which it is natural to extend entropy regularization to the constrained case. For an illustration, the example in \eqref{eqn: constrained OT example} has the following relaxation:
\begin{thmbox}
\begin{equation}\label{eqn: entropic constrained OT example}
\begin{aligned}
    &\min_{
\substack{P, s: P\1=r, P^{\top}\1 = c, P \geq 0, s \geq 0}}
  C\cdot P + \frac{1}{\eta}H(P, s),\\
  &\text{subject to \(D \cdot P - s = t\),}
\end{aligned}
\end{equation}
\end{thmbox}
where \(s\) is the slack variable, \(\eta > 0\) is the entropy regularization parameter, and \(H(P,s) := \sum_{ij}p_{ij}\log{p_{ij}} + s\log(s)\) is the entropy regularization term. The primal-dual form of \eqref{eqn: entropic constrained OT example} leads to a direct generalization of the Sinkhorn algorithm to the constrained case, which we develop in Section \ref{sec: variational} and Section \ref{sec: alg}. 

\begin{figure*}
    \centering
    \includegraphics[width = 0.99\textwidth]{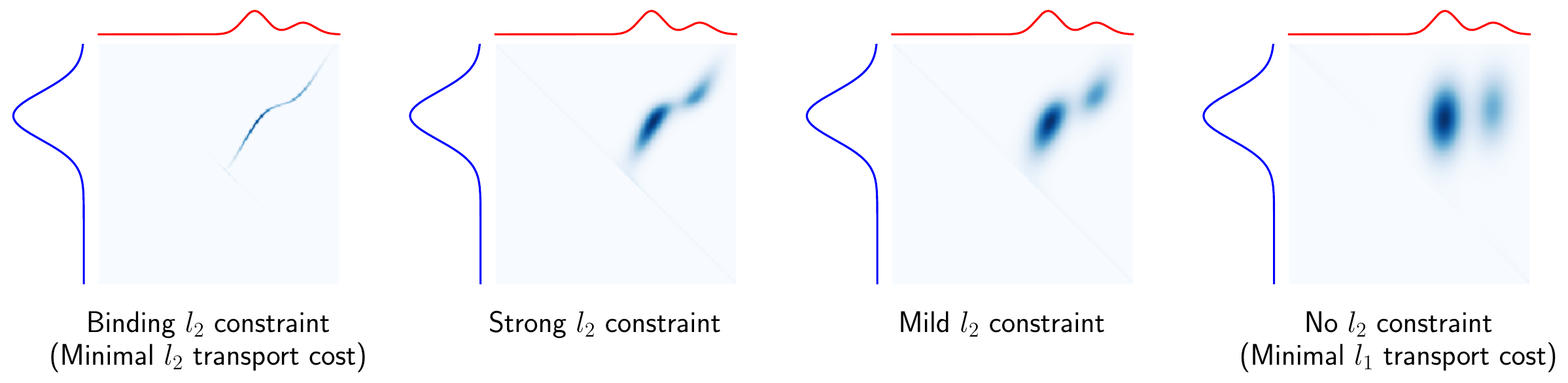}
    \caption{Illustration of 1D optimal transport under different inequality constraints. By incrementally decreasing the upper bound for the Euclidean distance transport cost, the transport plan evolves from minimizing the Euclidean distance transport cost to minimizing the Manhattan distance (\(l_1\)) transport cost. The transport plan is computed with the Sinkhorn-type procedure in Algorithm \ref{alg:1}.}
    \label{fig:1D_transport_evolution}
\end{figure*}
\subsection{Contributions}
In addition to the introduction of a novel Sinkhorn-type algorithm for constrained OT, this paper systematically generalizes existing theoretical results as well as practical acceleration techniques from the existing entropic OT literature. In particular,
\begin{itemize}
    \item As a corollary to the analysis in \citet{weed2018explicit}, we show that entropic optimal transport in the constrained case is exponentially close to the optimal solution.
    \item By extending the result in \citet{altschuler2017near}, we show that a Sinkhorn-type algorithm reaches approximate first-order stationarity in polynomial time.
    \item We further improve practical performance by introducing dynamic regularization scheduling in \citet{chen2023exponential} to the Sinkhorn-type algorithm.
    \item We extend the approximate sparsity analysis in \citet{tang2024accelerating} to the constrained OT case and introduce an accelerated second-order algorithm through sparse Newton iterations. 
    \item We introduce a variational formulation of the task, which naturally enables the use of first-order methods such as in \citet{dvurechensky2018computational}.
\end{itemize}
\subsection{Related literature} 

\paragraph{Constrained optimal transport} The numerical treatment for constrained OT focuses on special cases such as capacity constraints \citep{korman2013insights, korman2015optimal}, multi-marginal transport \citep{gangbo1998optimal,buttazzo2012optimal,benamou2015iterative,pass2015multi,khoo2020semidefinite}, and martingale optimal transport \citep{tan2013optimal,beiglbock2013model,galichon2014stochastic, dolinsky2014martingale, guo2019computational}. In terms of numerical methods for constrained OT, \citet{benamou2015iterative} defines an iterative Bregman projection approach for equality constraints and a Bregman–Dykstra iteration for general constraints. In Appendix \ref{appendix: bregman}, we show that the iterative Bregman projection is equivalent to our proposed update step in the equality case. Our contribution in relation to \citet{benamou2015iterative} is threefold. First, for equality constraints, we propose a generalized Sinkhorn-type algorithm that implements the iterative Bregman projection, whereas previous work only defines the projection step in special cases. Second, for inequality constraint, our approach is novel and enjoys theoretical guarantee as seen in Theorem \ref{thm: exp convergence OT} and Theorem \ref{thm: OT linear convergence}. Finally, the Bregman–Dykstra iteration in \citep{benamou2015iterative} is only implemented when a simple closed-form solution exists for each projection step, whereas our proposed method applies to general cases.

\paragraph{OT in machine learning} 
A substantial amount of research literature exists on the use of optimal transport in different areas of machine learning. Notable applications include statistical learning \citep{kolouri2017optimal,vayer2018optimal,genevay2019sample,luise2018differential,oneto2020exploiting,huynh2020otlda}, domain adaptation \citep{Fernando_2013_ICCV,redko2017theoretical,courty2017joint,alvarez2018structured,nguyen2022improving,nguyen2021most,xu2022unsupervised,turrisi2022multi}, and using optimal transport distance in designing training targets \citep{genevay2017gan,bousquet2017optimal,sanjabi2018convergence,deshpande2019max,lei2019geometric,patrini2020sinkhorn,onken2021ot}. The usage of entropic optimal transport in the constrained case allows practitioners to query transport plans which satisfy more complex structure than in the unconstrained case, which are beneficial across all the applications above. 

\paragraph{Acceleration for Sinkhorn}
There is a considerable body of work in speeding up the runtime of the Sinkhorn algorithm, and a significant portion of our numerical treatment is devoted to the extension of these algorithms to the constrained case. In addition to \citep{chen2023exponential,tang2024accelerating} to be discussed in Section \ref{sec: alg}, we mention a few noteworthy methods for acceleration.
Randomized or greedy row/column scaling \citep{genevay2016stochastic, altschuler2017near} can be directly extended to the constrained case by the inclusion of the dual variables for the linear constraints. Likewise, methods based on the variational form of entropic OT, such as Nesterov acceleration \citep{dvurechensky2018computational} and mirror descent \citep{lin2019efficient,kemertas2023efficient}, can be used in the constrained case by considering the Lyapunov function introduced in Section \ref{sec: variational}. While the majority of existing techniques can be directly extended to the constrained case, one notable exception is the class of methods based on approximation to the kernel \(K = \exp(-C\eta)\) \citep{deriche1993recursively,solomon2015convolutional,bonneel2016wasserstein, altschuler2019massively,scetbon2020linear,lakshmanan2022nonequispaced,huguet2023geodesic,li2023importance}. The main obstruction from performing kernel approximation in the constrained case is that the kernel is changing throughout the optimization process due to update in the constraint associated dual variables. As the kernel compression step is costly and often done in the offline stage, it poses a significant challenge if one has to update the kernel approximation with each dual variable update dynamically.

\section{Theoretical foundations of constrained entropic optimal transport}\label{sec: variational}
This section summarizes the entropic optimal transport in the constrained case.

\paragraph{Notations} We use \(M \cdot M' := \sum_{ij}m_{ij}m'_{ij}\) to denote the entry-wise inner product. For a matrix \(M\), the notation \(\log{M}\) stands for entry-wise logarithm, and similarly \(\exp(M)\) denotes entry-wise exponential. We use the symbol \(\lVert M\rVert_{1}\) to denote the entry-wise \(l_1\) norm, i.e. \(\lVert M\rVert_{1} := \lVert \mathrm{vec}(M)\rVert = \sum_{ij} \lvert m_{ij} \rvert \). The \(\lVert M \rVert_{\infty}\) and \(\lVert M \rVert_{2}\) norms are defined likewise as the entry-wise \(l_\infty\) and \(l_2\) norms, respectively. The notation \(\mathbf{1}_{n\times n}\) is the all-one \(n\times n\) matrix, and the notation \(\mathbf{1}\) denotes the all-one vector of appropriate size.

\paragraph{Background}
For simplicity, we assume the target and source density \(r, c \in \R^n\) satisfies \(\sum_i r_i = \sum_j c_j = 1\).
For any inequality constraint of the form \(D \cdot P \geq t\), note one can convert the condition to \((D - t \mathbf{1}_{n \times n}) \cdot P \geq 0\), and similarly in the equality case. Let \(K\) and \(L\) respectively denote the number of inequality and equality constraints. By the construction above, for each optimal transport problem under linear constraint, there exists \(D_{1}, \ldots D_{K}, D_{K+1}, \ldots, D_{K+L} \in \R^{n \times n}\) so that the linear constraints are encoded by \(\mathcal{I}, \mathcal{E}\) where
\begin{equation}\label{eqn: def of constraint}
    \begin{aligned}
        \mathcal{I} := &\bigcap_{k = 1, \ldots, K} \{P \mid D_{k} \cdot P \geq 0\},
        \\
        \mathcal{E} := &\bigcap_{l = 1, \ldots, L} \{P \mid D_{l+K} \cdot P = 0\}.
    \end{aligned}
\end{equation}

We summarize the \emph{general form of constrained optimal transport} by the following the following linear program (LP):
\begin{equation}\label{eqn: constrained OT general form}
    \min_{
\substack{P: P\1=r, P^{\top}\1 = c, P \geq 0}}
  C\cdot P, \, \text{such that \(P \in \mathcal{S}\)},
\end{equation}
where \(\mathcal{S} := \mathcal{I} \cap \mathcal{E}\) for \(\mathcal{I}, \mathcal{E}\) defined in \eqref{eqn: def of constraint}.

\paragraph{Entropic optimal transport under constraint}
Under entropic linear programming \citep{fang1992unconstrained}, one can write down the formulation for entropic optimal transport under general linear constraints. For \(k = 1, \ldots, K\), we define \(s_{k}\) to be the slack variable corresponding to \(D_{k} \cdot P\). The 
\emph{constrained entropic optimal transport} follows from the following equation:
\begin{equation}\label{eqn: entropic constrained OT general form}
\begin{aligned}
    &\min_{
\substack{P, s: P\1=r,\, P^{\top}\1 = c,\,\\ P \geq 0,\, s_{k} \geq 0 \, \forall\, k}} &C\cdot P + \frac{1}{\eta}H(P, s_1, \ldots, s_K),\\
  &\text{subject to} &D_{k} \cdot P = s_{k} \text{ for \(k = 1, \ldots, K\).}\\
  &  &D_{l+K} \cdot P = 0 \text{ for \(l = 1, \ldots, L\),}
\end{aligned}
\end{equation}
where the entropy term is defined by \(H(P, s_1, \ldots, s_K) = \sum_{ij}p_{ij} \log(p_{ij}) + \sum_{k = 1}^{K} s_{k} \log s_{k}\).

We motivate the optimization task in \eqref{eqn: entropic constrained OT general form} by Theorem \ref{thm: exp convergence OT}, which shows that the entropy-regularized optimal solution is exponentially close to the optimal solution: 

\begin{theorem}\label{thm: exp convergence OT}
    For simplicity, assume that \(\sum_{i}r_{i} = \sum_{j}c_j = 1\), the LP in \eqref{eqn: constrained OT general form} has a unique solution \(P^{\star}\), and assume that \(\lVert D_{k} \rVert_{\infty} \leq 1\) for \(k = 1,\ldots, K\). Denote \(P^{\star}_{\eta}\) as the unique solution to \eqref{eqn: entropic constrained OT general form}. There exists a constant \(\Delta\), depending only on the LP in \eqref{eqn: constrained OT general form}, so that the following holds for \(\eta \geq \frac{(K+1)(1 + \ln (4 n^2 (K+1)))}{\Delta}\):
    \[
    \lVert P^{\star}_{\eta} - P^{\star} \rVert_1 \leq 8n^{\frac2{K+1}}(K+1)\exp\left(-\eta\frac{\Delta}{K+1}\right).
    \]
\end{theorem}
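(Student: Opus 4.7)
The plan is to cast \eqref{eqn: entropic constrained OT general form} as a standard entropic linear program in a lifted variable and then invoke the exponential-convergence result of \citet{weed2018explicit} as a black box. Concretely, I would stack all primal variables into a single non-negative vector $x = (\mathrm{vec}(P), s_1,\ldots, s_K) \in \R^{n^2+K}_{\geq 0}$. The marginal equalities $P\mathbf{1}=r$, $P^\top\mathbf{1}=c$, the slack equations $D_k \cdot P - s_k = 0$ for $k \leq K$, and the equality constraints $D_{l+K}\cdot P = 0$ for $l\leq L$ bundle into one linear system $Ax=b$. The objective $C\cdot P$ becomes $\tilde c^{\top} x$ with $\tilde c$ carrying the entries of $C$ on the $P$-block and zero on the slack block, and the entropy term $H(P,s_1,\ldots,s_K)$ coincides with the canonical $\sum_i x_i \log x_i$. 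Hence \eqref{eqn: entropic constrained OT general form} is literally an entropy-regularized LP in standard form.

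Uniqueness of $P^{\star}$ forces uniqueness of the lifted optimum $x^{\star} = (\mathrm{vec}(P^{\star}), D_1\cdot P^{\star},\ldots, D_K\cdot P^{\star})$, so the nondegeneracy hypothesis of \citet{weed2018explicit} is satisfied. I would then define $\Delta$ to be the LP suboptimality gap of the lifted problem, i.e., the smallest strictly positive difference between $\tilde c^{\top} x^{\star}$ and $\tilde c^{\top} v$ over non-optimal vertices $v$ of the feasible polytope. This quantity depends only on $C$, $A$, and $b$, hence only on the data of \eqref{eqn: constrained OT general form}, which justifies the ``constant depending only on the LP'' wording in the statement. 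The normalization $\sum_i r_i=\sum_j c_j=1$ together with $\lVert D_k\rVert_\infty\leq 1$ forces $s^{\star}_k \in [0,1]$, bounding the sup-norm of $x^{\star}$, which is precisely what controls the prefactor in Weed's bound.

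Applying \citet{weed2018explicit} to the lifted LP yields, once $\eta$ exceeds an explicit threshold depending on $\Delta$ and the problem dimensions, an exponential bound of the form $\lVert x_\eta - x^{\star}\rVert_1 \lesssim \exp(-\eta \Delta / m)$, where $m$ counts the effective number of blocks in the entropic regularizer. The specific prefactor $n^{2/(K+1)}(K+1)$ and the exponent $\Delta/(K+1)$ arise by partitioning the $n^2+K$ variables into $K+1$ natural blocks, one block of $n^2$ transport entries and $K$ singleton slack blocks, and applying the block-wise form of Weed's argument; an AM--GM step across blocks distributes the $n^2$ dimension factor as $n^{2/(K+1)}$ across the $K+1$ exponentially shrinking contributions. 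The activation threshold $\eta \geq (K+1)(1+\ln(4n^2(K+1)))/\Delta$ is exactly the regime where the exponential term dominates the combinatorial prefactor, as a direct calculation of when the overall bound falls below $1$ shows. Finally, since $P$ is a sub-block of $x$, the inequality $\lVert P^{\star}_\eta-P^{\star}\rVert_1 \leq \lVert x_\eta - x^{\star}\rVert_1$ transfers the estimate back to the transport plan.

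The main obstacle is the third step: extracting the sharper $(K+1)$ scaling rather than the crude $(n^2+K)$ scaling that a naive application of \citet{weed2018explicit} would produce. This requires leveraging the block structure of the lifted LP, where the $n^2$ transport coordinates share a simplex-type constraint while each slack variable lives on its own axis, and carefully tracking the constants through Weed's proof. Verifying that the uniqueness of $P^{\star}$ gives a strictly positive $\Delta$ independent of $\eta$ is routine, but one must confirm that $\Delta$ carries no hidden dependence on the regularization or on the chosen block decomposition.
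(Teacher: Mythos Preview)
Your high-level plan---lift to the joint variable $(P,s)$, apply \citet{weed2018explicit} as a black box, and restrict back to $P$---is exactly the paper's approach, and your definition of $\Delta$ matches the paper's vertex optimality gap. However, the ``main obstacle'' you flag is based on a misreading of which parameter governs Weed's bound. Corollary~9 of \citet{weed2018explicit} gives
\[
\lVert x_\eta - x^\star \rVert_1 \;\leq\; 2R_1 \exp\!\left(-\frac{\eta\Delta}{R_1} + 1 + \frac{R_H}{R_1}\right),
\]
where $R_1$ and $R_H$ are the $\ell_1$ and entropic radii of the lifted feasible set---not the ambient dimension $n^2+K$ or any block count. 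The normalizations $\sum_{ij}P_{ij}=1$ and $\lVert D_k\rVert_\infty\le 1$ give $1\le R_1\le K+1$ directly, and this is where the $(K+1)$ in both the exponent and the prefactor comes from. Likewise $R_H \le 2\log n + K/e$, and the factor $n^{2/(K+1)}$ is simply $\exp(R_H/R_1)$ after substituting these bounds and using $R_H-\eta\Delta\le 0$ on the threshold range for $\eta$. No block decomposition or AM--GM step is needed; the ``crude $(n^2+K)$'' scaling never appears once you track $R_1,R_H$ rather than dimensions. So the step you single out as hardest is in fact immediate, and the proof is a two-line calculation after invoking Weed's corollary.
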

The definition of \(\Delta\) and the proof are deferred to Appendix \ref{appendix: proof of OT exp convergence}. In this work, we only consider examples for which \(K = O(1)\), in which case the bound in Theorem \ref{thm: exp convergence OT} does not significantly differ from the unconstrained case. It should be noted that the dependency on $n$ goes down with an increasing number of inequality constraints $K$, but one can check the bound is monotonically worse as $K$ increases. Interpreting the bound could mean that adding more inequality constraints limits the degree of freedom but may also potentially amplify the error caused by entropy regularization.

\paragraph{Variational formulation under entropic regularization}
By introducing the Lagrangian variable and using the minimax theorem (for a detailed derivation, see Appendix \ref{sec: primal versus primal dual}), we formulate the associated primal-dual problem to \eqref{eqn: entropic constrained OT general form} as:
\begin{equation*}
\begin{aligned}
    &\max_{x,y,a} \min_{P,s} \big[ L(P, s, x, y, a) \\ :=&\frac{1}{\eta} P \cdot \log P + C\cdot P - x\cdot(P\1-r) - y\cdot(P^{\top}\1-c)\\
    + &\frac{1}{\eta}\sum_{k = 1}^{K}s_{k} \log{s_{k}} + \sum_{k=1}^{K}a_{k}s_{k} -\sum_{m = 1}^{K+L}a_{m}(D_{m} \cdot P) \big],
\end{aligned}
\end{equation*}
where \(s = (s_{1}, \ldots, s_{K})\) is the shorthand for slack variable, \(a = (a_{1}, \ldots, a_{K}, a_{K+1}, \ldots, a_{K+L})\) is the shorthand for the constraint dual variables (excluding the original row/column constraints).
 
By eliminating \(P, s\) (see Appendix \ref{sec: primal versus primal dual}), the function \(f(x,y,a)  := \min_{P,s} L(P,s, x, y, a)\) admits the following form
\begin{equation}\label{eqn: dual form}
    \begin{aligned}
        &f(x,y,a)\\
        =&\frac{-1}{\eta}\sum_{ij}\exp{\Big(\eta(-C_{ij} + \sum_{m = 1}^{L + K}a_{m}\left(D_{m}\right)_{ij}+ x_i + y_{j}) - 1 \Big)}\\
        + &\sum_{i}x_{i}r_{i} + \sum_{j}y_{j}c_{j} - \frac{1}{\eta}\sum_{k = 1}^{K} {\exp(-\eta a_{k} - 1)}.
    \end{aligned}
\end{equation}
As a consequence of the minimax theorem, maximizing over \(f\) is equivalent to solving the problem defined in \eqref{eqn: entropic constrained OT general form}. We emphasize that \(f\) is \emph{concave}, allowing one to use routine convex optimization techniques. 

Define \(P = \exp{\big(\eta(-C + \sum_{m}a_{m}D_{m} + x\1^{\top} + \1y^{\top}) - 1\big)}\) as the intermediate matrix corresponding to dual variables \(x,y,a\). We write down the first derivative of the Lyapunov function \(f\) :
\begin{equation}\label{eqn: derivative formula}
\begin{aligned}
    &\nabla_{x} f = r - P \mathbf{1}, \quad \nabla_{y} f = c - P^{\top} \mathbf{1},\\
    &\partial_{a_{k}} f = \exp(-\eta a_{k} - 1) - P \cdot D_{k}, \ \ \forall k \in [K], \\
    &\partial_{a_{l+K}} f = -P \cdot D_{l+K},\ \ \forall l \in [L]\,.
\end{aligned}
\end{equation}
One can use any first-order method on \(f\) using the derivatives provided in \eqref{eqn: derivative formula}. For example, one option with a desirable performance guarantee would be accelerated gradient ascent \citep{nesterov1983method,dvurechensky2018computational}. Overall, the formulation in \eqref{eqn: dual form} shows that the introduction of constraint does not pose a substantial change when compared to the optimization task in the unconstrained case.

\section{Main algorithm}\label{sec: alg}
This section proposes an efficient Sinkhorn-type algorithm in the constrained case. We assume \(K + L = O(1)\) to ensure the efficiency of the proposed approach.
\paragraph{Main idea of the algorithm}
The variational formulation under entropic regularization shows that one can effectively solve for the entropic formulation in \eqref{eqn: entropic constrained OT general form} by solving for the optimization task:
\[
\max_{x \in \R^n,y \in \R^n,a \in \R^{K+L}}f(x,y,a)
\]
for $f$ given in \eqref{eqn: dual form}. We define a Sinkhorn-type algorithm by introducing three iteration steps:
\begin{enumerate}
    \item (\(x\) update) \(x \gets \argmax_{\tilde{x}}f(\tilde{x},y,a)\),
    \item (\(y\) update) \(y \gets \argmax_{\tilde{y}}f(x,\tilde{y},a)\),
    \item (\(a\) update) \(a, t \gets \argmax_{\tilde{a}, \tilde{t}}f(x + \tilde{t}\mathbf{1},y,\tilde{a})\)
\end{enumerate}

The proposed alternating update approach is summarized in Algorithm \ref{alg:1}.

\paragraph{Implementation detail}
Let \((x,y, a)\) be the current dual variables and define \[P = \exp{\left(\eta(-C + \sum_{m}a_{m}D_{m} + x\1^{\top} + \1y^{\top}) - 1\right)}\] as the current intermediate transport plan defined by these dual variables.
By \eqref{eqn: derivative formula}, one has \(\nabla_{x} f = 0 \iff P\mathbf{1} = r\) and \(\nabla_{y} f = 0 \iff P^{\top}\mathbf{1} = c\). Thus, the \(x,y\) update steps amount to row/column scaling of the matrix \(P\), which is identical to the Sinkhorn algorithm. 

The \(a\) update step constitutes the main novelty of our algorithm in the constrained case. In particular, the inclusion of the \(t\) variable improves numerical stability by enforcing a normalization condition: Suppose \(P\) is the intermediate transport plan formed by the dual variable \((x + t \mathbf{1}, y, a)\), then 
\begin{equation}\label{eqn: t optimality}
    \partial_{t} f(x + t \mathbf{1}, y, a) = 0 \iff \sum_{ij}P_{ij} = \sum_{i}r_i,
\end{equation}
which ensures that terms of the form \(M \cdot P\) can be bounded by \(\lVert M \rVert_{\infty}\left(\sum_{i}r_i\right)\). Thus, optimality in the \(t\) variable ensures boundedness in the derivatives of \(f\) such as those in \eqref{eqn: derivative formula}.

We propose to use Newton's method for the \(a\) update step. Namely, by directly computing the gradient and the Hessian, \(\nabla_{at}f, \nabla_{at}^2f\), one uses the search direction $(\Delta a, \Delta t) = -\left(\nabla^2_{at} f\right)^{-1}\nabla_{at} f$. The learning rate is obtained through the standard backtracking line search scheme \citep{boyd2004convex}.

\begin{algorithm}
\caption{Sinkhorn-type algorithm under linear constraint}\label{alg:1}
\begin{algorithmic}[1]
\Require $f, x_{\mathrm{init}},y_{\mathrm{init}},a_{\mathrm{init}}, N, i = 0, \epsilon > 0$

\State $(x,y, a) \gets (x_{\mathrm{init}}, y_{\mathrm{init}}, a_{\mathrm{init}}) $

\While{$i < N$} 
\State $i \gets i + 1$

\State \texttt{\# Row\&Column scaling step}
    \State \(P \gets \exp{\left(\eta(-C + \sum_m a_mD_m + x\1^{\top} + \1y^{\top}) - 1\right)}\)
    \State \( x \gets x + \left(\log(r) - \log(P\mathbf{1} ) \right)/\eta\) 
    \State \(P \gets \exp{\left(\eta(-C + \sum_m a_mD_m + x\1^{\top} + \1y^{\top}) - 1\right)}\)
    \State \( y \gets y + \left(\log(c) - \log(P^{\top}\mathbf{1} ) \right)/\eta\)
\State \texttt{\# Constraint dual update step}
\State \(a, t \gets \argmax_{\tilde{a}, \tilde{t}}f(x + \tilde{t} \mathbf{1},y,\tilde{a})\)
\State \(x \gets x + t\mathbf{1}\)

\EndWhile

\State Output dual variables $(x,y,a)$.
\end{algorithmic}
\end{algorithm}

\paragraph{Complexity analysis of Algorithm \ref{alg:1}}
The row/column scaling step is identical to the Sinkhorn algorithm and thus costs \(O(n^2)\) per iteration. For the constraint dual update step, the computation cost is dominated by the calculation of \(\nabla_{at}^2f\). For the formula of the Hessian, one can refer to \eqref{eqn: Hessian formula} in Appendix \ref{appendix: SNS et al} and see that the cost is dominated by the computation of \(\sum_{ij}P_{ij}\left(D_{m}\right)_{ij}\left(D_{m'}\right)_{ij}\) for \(m, m' = 1, \ldots, K + L\), which leads to a computation complexity of \(O(N_a(K+L)^2n^2))\), where \(N_a\) is the number of iterations taken in the Newton step. The computation complexity in the constraint dual update step is \(O(n^2)\) for two reasons. First, this algorithm assumes \(K + L = O(1)\). Second, Newton's method enjoys super-exponential convergence practically \citep{boyd2004convex}, and thus one can set \(N_a = O(1)\), which is usually sufficient to reach machine precision.

\subsection{Acceleration techniques}
\paragraph{Entropy regularization scheduling} An important feature of the Sinkhorn algorithm is that the iteration complexity heavily depends on the entropy regularization term \(\eta\), the tuning of which plays a significant part in practical performance. To that end, one can aid acceleration by using the doubling entropy regularization scheduling technique introduced in \citep{chen2023exponential}. For a desired entropy regularization value $\eta_{\mathrm{final}}$, we take an initial regularization strength \(\eta_{\mathrm{init}}\) and take $N_{\eta} = \ceil{\mathrm{log}_2(\eta_{\mathrm{final}}/\eta_{\mathrm{init}})}$. Then, one defines successively doubling regularization levels \(\eta_0, \ldots, \eta_{N_{\eta}}\) so that \(\eta_{\mathrm{init}} = \eta_{0} < \ldots < \eta_{N_{\eta}} = \eta_{\mathrm{final}}\) and \(\eta_{i}= 2\eta_{i-1}\) for \(i = 1, \ldots, N_{\eta} - 1\). For each step \(i\), one runs the subroutine in Algorithm \ref{alg:1} at \(\eta_{i-1}\), and the obtained dual variable is used as the initialization when calling the subroutine for \(\eta_{i}\).
\vspace{-0.5em}
\paragraph{Second-order acceleration through sparsity}
We further accelerate the Sinkhorn-type algorithm with Sinkhorn-Newton-Sparse (SNS) \citep{tang2024accelerating}, a second-order method originally developed for the unconstrained case. Instead of alternating maximization as in Algorithm \ref{alg:1}, one can instead introduce the combined variable \(z = (x, y, a)\), where a naive strategy would be to optimize directly \(f\) through a full Newton update of the type  \(z = z - \alpha(\nabla_{z}^2 f)^{-1} \nabla_z f\), where \(\alpha\) can be obtained through backtracking line search. However, a full Newton step would require an impractical \(O((n+K+L)^3)\) scaling.

The SNS algorithm introduces a practical second-order method through a sparse approximation of the Hessian matrix, and we show how one can extend the algorithm to the constrained OT case. The key observation of SNS is that the Hessian submatrix corresponding to variable \(x, y\) has a special structure:
\begin{equation}
\nabla_{xy}^2 f(x, y, a) = \eta
\begin{bmatrix}
\mathrm{diag}(P\1) & P \\
P^{\top} & \mathrm{diag}(P^{\top}\1)
\end{bmatrix},
\end{equation}
which admits a sparse approximation as long as \(P\) admits a sparse approximation. Moreover, the full Hessian matrix to \(f\) admits a sparse approximation: As \(K + L = O(1)\), the blocks of \(\nabla^2 f\) corresponding to \(\nabla_{xy}\nabla_{a}f\) and \(\nabla_a^2 f\) lead to at most \(O(n)\) nonzero entries to keep track of.

The rationale for approximate sparsity is simple: Under a mild uniqueness assumption, the optimal solution \(P^{\star}\) to the LP in \eqref{eqn: constrained OT general form} has at most \(2n - 1 + K+L = O(n)\) nonzero entries due to the fundamental theorem of linear programming \citep{luenberger1984linear}.
Moreover, the exponential closeness result in Theorem \ref{thm: exp convergence OT} implies that the entries of \(P^{\star}_{\eta}\) decay at a rate of \(\exp{(-\frac{\eta\Delta}{K+1})}\) except at \(2n - 1 + K+L = O(n)\) entries, which proves that approximate sparsity holds in the constrained case as well.

We propose an extended SNS algorithm by sparsifying the \(\nabla_{xy}^2 f\) block in which one keeps only \(O(n)\) nonzero entries. The sparse Newton step is used in combination with Algorithm \ref{alg:1} as a warm start to achieve rapid acceleration. The proposed scheme leads to the same \(O(n^2)\) per-iteration complexity as SNS in the unconstrained case. Implementation details can be found in Algorithm \ref{alg:SNS augmented} in Appendix \ref{appendix: SNS et al}.

\section{Convergence analysis}\label{sec: analysis}
In this section, we present a convergence bound for a modified version of the proposed Sinkhorn-type algorithm in terms of the stationarity condition. Similar to the proof in \citep{altschuler2017near} for the unconstrained case, our proof strategy relies on characterizing convergence through the Lyapunov function \(f\) introduced in \eqref{eqn: dual form}. For the rest of this section, we assume \(\sum_{i}r_i = \sum_j c_j = 1\).

The following Theorem characterizes the improvement of the Lyapunov function from one step of the proposed algorithm (proof is in Appendix \ref{appendix: proof of OT poly convergence}):
\begin{theorem}\label{thm: poly convergence OT}
    Let \((x, y, a)\) be the current dual variable which has undergone at least one update step, and let \(P\) be the associated transport matrix, i.e., \[P = \exp{\left(\eta(-C + \sum_{m}a_{m}D_{m} + x\1^{\top} + \1y^{\top}) - 1\right)}.\] 

    For the \(x\) update step satisfying \(x' = \argmax_{\tilde{x}}f(\tilde{x},y,a)\), one has
    \begin{equation}\label{eqn: x step result}
        f(x',y,a) - f(x,y,a) = \KL{r}{P\mathbf{1}},
    \end{equation}
    where \(\KL{\cdot}{\cdot}\) stands for the Kullback-Leibler divergence.
    
    For the \(y\) update step satisfying \(y' = \argmax_{\tilde{y}}f(x,\tilde{y},a)\), one has
    \begin{equation}\label{eqn: y step result}
        f(x,y',a) - f(x,y,a) = \KL{c}{P^{\top}\mathbf{1}}.
    \end{equation}
    Consider the \(a\) update step satisfying \((a',t') = \argmax_{\tilde{a},\tilde{t}}f(x + \tilde{t} \mathbf{1}, y, \tilde{a})\). For \(m = 1, \ldots ,K + L\), define \(d_m\) as the optimality condition corresponding to \(a_m\), i.e.
    \begin{equation}\label{eqn: def of constraint distance}
        \begin{aligned}
            &d_{k} := \exp(-\eta a_{k} - 1) - P \cdot D_{k}, & k = 1, \ldots, K
            \\
            &d_{l + K} := - P \cdot D_{l+K}, &l = 1, \ldots, L.
        \end{aligned}
    \end{equation}
    Then, for \(c_{d} := \max_{m \in [K+L]} \lVert D_m \rVert_{\infty}\), one has
    \begin{equation}\label{eqn: a augmented step result}
    \begin{aligned}
        &f(x + t'\mathbf{1},y,a') - f(x,y,a) \\\geq &\sum_{k=1}^{K}\lvert  d_k \rvert \min\left(
    \frac{1}{8\eta}
    ,
    \frac{\lvert  d_k \rvert}{8\eta c_d + 4\eta(K+L)c_d^2}
    \right)\\
    + 
    &\sum_{l=1}^{L}\frac{d_{l+K}^2}{2\eta (K+L)c_d^2}.
    \end{aligned}
    \end{equation}
\end{theorem}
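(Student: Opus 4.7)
The plan is to treat the three update steps separately, since they have distinct structures. Parts 1 and 2 reduce to a direct closed-form computation, while Part 3 is the technical heart of the argument and requires a case analysis for the inequality dual variables.

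For the $x$ update I solve $\nabla_{\tilde x}f(\tilde x,y,a)=0$ explicitly. Using \eqref{eqn: derivative formula}, the optimality condition $P(\tilde x,y,a)\1=r$ together with the exponential form of $P$ yields $x'_i = x_i + \frac{1}{\eta}\log(r_i/(P\1)_i)$. Substituting back, the change in the exponential sum $-\frac{1}{\eta}\sum_{ij}P_{ij}$ cancels: row rescaling enforces $(P'\1)_i=r_i$, so $\sum_{ij}P'_{ij}=\sum_i r_i=1$, and $\sum_{ij}P_{ij}=1$ already holds by the $t$-optimality in \eqref{eqn: t optimality} from the previous iteration (this is where the ``undergone at least one update step'' hypothesis is used). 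What remains is $\sum_i r_i(x'_i-x_i)$, which, after using that $r$ and $P\1$ are probability vectors, collapses to the stated KL term. Part 2 is symmetric, with $(x,r,P\1)$ replaced by $(y,c,P^{\top}\1)$.

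For the $(a,t)$ update I exploit the fact that $(a',t')$ is a joint argmax: for any test direction $(\Delta a,\Delta t)$, $f(x+t'\1,y,a')-f(x,y,a) \geq f(x+\Delta t\1,y,a+\Delta a)-f(x,y,a)$. I choose a distinct perturbation for each index $m\in\{1,\ldots,K+L\}$ keeping the other coordinates fixed, and combine the per-index lower bounds. For an equality index $l$, I perturb only the coordinate $a_{l+K}$ with step proportional to $d_{l+K}$ and $\Delta t=0$. A descent-lemma-type second-order bound along that direction, with the Hessian controlled by $\eta(K+L)c_d^2$ via $\sum_{ij}P_{ij}=1$, $|(D_m)_{ij}|\leq c_d$, and a Gershgorin-type estimate across the $K+L$ coupled coordinates, gives $\frac{d_{l+K}^2}{2\eta(K+L)c_d^2}$ after optimizing the step size.

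For an inequality index $k$, the gradient is $d_k=\exp(-\eta a_k-1)-P\cdot D_k$ and the diagonal Hessian entry carries an additional $-\eta\exp(-\eta a_k-1)$ term that can be arbitrarily large when $a_k$ is very negative. My plan is a two-regime analysis on $|d_k|$: in the regime where $|d_k|$ is not too large, a second-order Taylor expansion with the augmented Hessian bound gives the branch $\frac{|d_k|^2}{8\eta c_d+4\eta(K+L)c_d^2}$; in the regime where $|d_k|$ is large, I take a fixed step of magnitude $O(1/\eta)$ in the direction $\sign(d_k)$, chosen so that $\exp(-\eta a_k-1)$ varies by at most a constant factor over the segment, yielding the first-order branch $\frac{|d_k|}{8\eta}$. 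Taking the minimum of the two branches recovers \eqref{eqn: a augmented step result}. The main obstacle will be calibrating the crossover between the two regimes and the fixed step size precisely enough to obtain the constants $8$ and $4$ in the stated inequality; this requires a uniform Hessian bound along the chosen short interval together with the fact that the step is sized to keep $\exp(-\eta a_k)$ within a constant factor of its initial value.
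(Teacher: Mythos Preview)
Your outline for the $x$ and $y$ updates is correct and matches the paper. The gap is in the $(a,t)$ step, and it has two related parts.

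First, your plan to ``choose a distinct perturbation for each index $m$ keeping the other coordinates fixed, and combine the per-index lower bounds'' does not yield the claimed sum. If for each $m$ you take a separate test direction $(\delta_m e_m,0)$ and deduce $f(x+t'\1,y,a')-f(x,y,a)\geq L_m$, these are $K+L$ inequalities with the \emph{same} left-hand side; you may conclude $\geq\max_m L_m$, not $\geq\sum_m L_m$, and the theorem claims the sum. The paper instead takes a \emph{single} joint perturbation $\delta a\in\R^{K+L}$ and applies Taylor's remainder theorem once. The point is that after the spectral bound $\nabla_a^2 f_{\mathrm{aug}}\succeq S-\eta(K+L)c_d^2 I$ (with $S$ diagonal, coming from the slack terms), the quadratic part of the lower bound is $-\tfrac{1}{2}\eta(K+L)c_d^2\sum_m\delta a_m^2-\tfrac{1}{2}\sum_k\eta\exp(-\eta a_k-1)\exp(-\eta\xi\delta a_k)\delta a_k^2$, which is fully separable in the coordinates $\delta a_m$; each $\delta a_m$ is then optimized independently and the contributions add.

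Second, your choice $\Delta t=0$ is what prevents that spectral bound from being available. The paper does not work with $f$ directly but with the reduced function $f_{\mathrm{aug}}(x,y,\hat a):=\max_{\tilde t}f(x+\tilde t\1,y,\hat a)$, whose closed form is a log-sum-exp and whose Hessian in $a$ involves the \emph{normalized} matrix $\hat P/\sum_{ij}\hat P_{ij}$ rather than $\hat P$. That normalization is exactly what makes the curvature bound $\eta(K+L)c_d^2$ hold uniformly along the whole segment from $a$ to $a+\delta a$ (via the identification of the non-slack block with $-\eta$ times a covariance matrix, then a trace bound). If you freeze $t$, then $\sum_{ij}P_{ij}$ drifts away from $1$ along the segment and the Hessian estimate at the Taylor remainder point $a+\xi\delta a$ is not justified by the hypothesis $\sum_{ij}P_{ij}=1$, which only holds at the starting point. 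Your two-regime idea for the inequality indices is on the right track; the paper realizes it through the single choice $\delta a_k=\max\big(-\tfrac{\log 2}{\eta},\ d_k/[\eta(2\exp(-\eta a_k-1)+(K+L)c_d^2)]\big)$, a case split on which branch of the $\max$ is active, and then $\exp(-\eta a_k-1)\leq|d_k|+c_d$ together with a mediant-type inequality to extract the stated $\min$.
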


We now prove a bound on the first-order stationarity condition.
As one is performing alternating optimization for more than two variables, the convergence proof requires a modification to Algorithm \ref{alg:1}, in which one examines the stationary condition on each variable and chooses the update step greedily. The following Theorem characterizes the approximate stationarity of the greedy version of Algorithm \ref{alg:1} (proof is in Appendix \ref{appendix: proof of OT linear convergence}):
\begin{theorem}\label{thm: OT linear convergence}
    Let \((x,y,a)\) be the current variable, let \(P\) be the associated transport matrix, and let \(d_{m}\) be as in \eqref{eqn: def of constraint distance}. Define \(Q_x, Q_y, Q_a\) by
    \begin{equation*}
        \begin{aligned}
            Q_x = &\KL{r}{P\mathbf{1}}, Q_y = \KL{c}{P^{\top}\mathbf{1}},\\
            Q_a = &\sum_{k=1}^{K}\lvert  d_k \rvert \min\left(
    \frac{1}{8\eta}
    ,
    \frac{\lvert  d_k \rvert}{8\eta c_d + 4\eta(K+L)c_d^2}
    \right)\\
    + 
    &\sum_{l=1}^{L}\frac{d_{l+K}^2}{2\eta (K+L)c_d^2},
        \end{aligned}
    \end{equation*}
    which are respectively the right-hand-side of \eqref{eqn: x step result}, \eqref{eqn: y step result}, and \eqref{eqn: a augmented step result}. 
    
    Consider a greedy version of Algorithm \ref{alg:1}, in which only one update step is performed at each iteration, and an \(x\) (resp. \(y\), \(a\)) update step is chosen if \(Q_x\) (resp. \(Q_y\), \(Q_a\)) is the largest among \((Q_x, Q_y, Q_a)\). Let \(c_d\) be as in Theorem \ref{thm: poly convergence OT}, and define \(c_{g}\) as the gap term in the Lyapunov function \(f\) at initialization, i.e.
    \[
    c_g = \max_{\tilde{x},\tilde{y},\tilde{a}}f(\tilde{x},\tilde{y},\tilde{a}) - f(x_{\mathrm{init}},y_{\mathrm{init}},a_{\mathrm{init}}).
    \]
    Let \((x^u, y^u, a^u)\) be the dual variable after \(u\) iterations and let \(P_u\) be the associated intermediate transport plan. Then, the greedy algorithm outputs dual variables $(x, y, a)$ which satisfies the below approximate stationarity condition in \(O(c_g \epsilon^{-2})\) iterations:
    \begin{equation}\label{eqn: greedy main result}
    \lVert \nabla f(x, y, a) \rVert_1 \leq \epsilon.
    \end{equation}
    Moreover, the matrix \(P\) associated with the outputted dual variable satisfies the following condition:
    \begin{equation}\label{eqn: greedy main result 2}
    \begin{aligned}
        \epsilon \geq &\lVert P\mathbf{1} - r \rVert_{1} + \lVert P^{\top}\mathbf{1} - c \rVert_{1}\\
        +
        &\sum_{k=1}^{K}\lvert \min\left(P\cdot D_k, 0\right)  \rvert + \sum_{l=1}^{L}\lvert P\cdot D_{l+K}  \rvert,
    \end{aligned}
    \end{equation}
    which shows that \(P\) approximately satisfies the linear constraints in \eqref{eqn: constrained OT general form}.

\end{theorem}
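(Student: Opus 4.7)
The plan is to run the standard Lyapunov-descent argument of \citet{altschuler2017near}, lifted to three blocks of variables. By Theorem \ref{thm: poly convergence OT}, a single $x$-, $y$-, or $a$-update increases the Lyapunov function $f$ by at least $Q_x$, $Q_y$, or $Q_a$ respectively, so the greedy rule improves $f$ by at least $\max(Q_x, Q_y, Q_a) \geq \tfrac{1}{3}(Q_x + Q_y + Q_a)$. Since the initial gap in $f$ is at most $c_g$, it suffices to establish an inequality of the form
\begin{equation*}
Q_x + Q_y + Q_a \;\geq\; C(\eta, c_d, K, L)\,\epsilon^2 \quad\text{whenever } \|\nabla f(x,y,a)\|_1 \geq \epsilon,
\end{equation*}
after which a direct telescoping yields the $O(c_g \epsilon^{-2})$ iteration count.

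For $Q_x$ and $Q_y$ I would invoke Pinsker's inequality. The critical observation is that after any single update step the matrix $P$ satisfies $\sum_{ij} P_{ij} = 1$: an $x$-step enforces $P\mathbf{1} = r$ with $\sum_i r_i = 1$; a $y$-step is symmetric; and the $a$-step's $t$-shift enforces \eqref{eqn: t optimality}, which again gives total mass $1$. Hence both $P\mathbf{1}$ and $P^\top\mathbf{1}$ are probability vectors, so Pinsker yields $Q_x \geq \tfrac{1}{2}\|r-P\mathbf{1}\|_1^2 = \tfrac{1}{2}\|\nabla_x f\|_1^2$ and likewise $Q_y \geq \tfrac{1}{2}\|\nabla_y f\|_1^2$. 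For $Q_a$, I would identify each $d_m$ with $\partial_{a_m} f$ via \eqref{eqn: derivative formula}. Writing $A = 1/(8\eta)$ and $B = 1/(8\eta c_d + 4\eta(K+L)c_d^2)$, the $k$-th inequality summand equals $|d_k|\min(A, B|d_k|)$; splitting on whether $|d_k| \leq A/B$ or $|d_k| > A/B$ gives $\geq B|d_k|^2$ in the first regime and $\geq A|d_k| \geq A^2/B = \Omega(1)$ in the second. The equality-constraint summands are already of the form $\Omega(d_{l+K}^2)$.

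If $\|\nabla f\|_1 \geq \epsilon$, pigeonholing across the three blocks places mass at least $\epsilon/3$ on one of $\|\nabla_x f\|_1$, $\|\nabla_y f\|_1$, or $\sum_m |d_m|$; in the last case one further $|d_m| \geq \epsilon/(3(K+L))$. In every sub-case the bounds above produce an $\Omega(\epsilon^2)$ lower bound on the appropriate $Q$ (the large-$|d_k|$ regime even gives $\Omega(1) \gg \epsilon^2$), which closes the iteration-count argument. For the constraint-satisfaction claim \eqref{eqn: greedy main result 2}, once $\|\nabla f(x,y,a)\|_1 \leq \epsilon$ I would note $\|P\mathbf{1}-r\|_1 = \|\nabla_x f\|_1$, $\|P^\top\mathbf{1}-c\|_1 = \|\nabla_y f\|_1$, $|P\cdot D_{l+K}| = |\partial_{a_{l+K}} f|$, and for the inequality part the identity $d_k = \exp(-\eta a_k - 1) - P\cdot D_k$ together with positivity of the exponential, which gives $(-P\cdot D_k)_+ \leq (d_k)_+ \leq |\partial_{a_k} f|$; summing these reproduces the right-hand side of \eqref{eqn: greedy main result 2}, whose total is dominated by $\|\nabla f\|_1 \leq \epsilon$.

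The main technical obstacle will be keeping the constants in the $Q_a$ analysis explicit and positive across the two regimes of $|d_k|$, since the bound in Theorem \ref{thm: poly convergence OT} is only linear in $|d_k|$ once $|d_k| > A/B$. Verifying that the effective $C$ has the advertised polynomial dependence on $\eta, c_d, K+L$ and remains independent of both $\epsilon$ and the iteration counter $u$ requires careful tracking of how $A$ and $B$ scale in these parameters; otherwise the argument reduces to a block-wise extension of the unconstrained Sinkhorn analysis in \citet{altschuler2017near}.
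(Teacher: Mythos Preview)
Your proposal is correct and follows essentially the same route as the paper's proof: both use the greedy inequality $\max(Q_x,Q_y,Q_a)\geq\tfrac13(Q_x+Q_y+Q_a)$, invoke Pinsker on $Q_x,Q_y$ (after noting $\sum_{ij}P_{ij}=1$ post any update), split the inequality-constraint summands in $Q_a$ into a large-$|d_k|$ regime giving $\Omega(1)$ improvement and a small-$|d_k|$ regime giving $\Omega(|d_k|^2)$, and then telescope against $c_g$. The only cosmetic difference is that the paper bounds $\epsilon^2$ by the full quadratic sum via Cauchy--Schwarz over all $K+L+2$ blocks and separately counts the at most $O(c_g)$ iterations in the large-$|d_k|$ regime, whereas you pigeonhole onto a single block and absorb the large-$|d_k|$ case directly into the $\Omega(\epsilon^2)$ bound; both organizations yield the same $O(c_g\epsilon^{-2})$ count, and your derivation of \eqref{eqn: greedy main result 2} from $(-P\cdot D_k)_+\leq|d_k|$ matches the paper's exactly.
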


\section{Numerical experiments}
\begin{figure*}
    \centering
       \includegraphics[width = 0.9\textwidth, trim={0 0.4cm 0 0},clip]{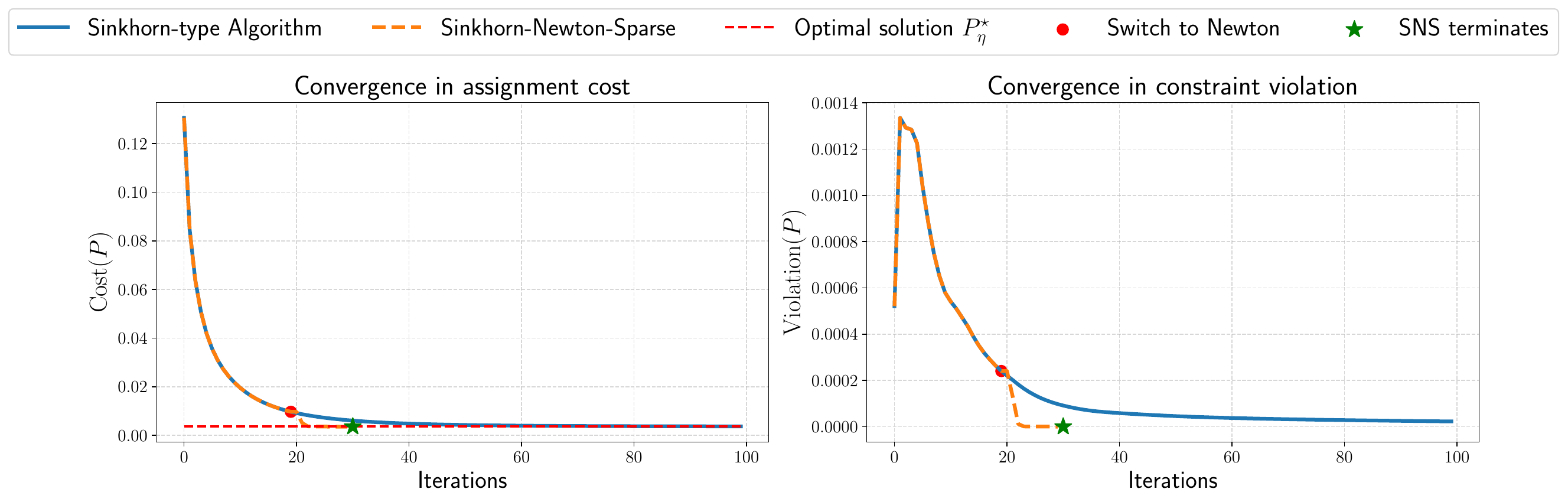}
        \caption{Random assignment problem. Plot of the proposed Sinkhorn-type algorithm in terms of assignment cost and constraint violation. Specifically, constraint violation is defined by $\mathrm{Violation}(P) = |\min(0, D_I \cdot \mathrm{Round}(P,\mathcal{U}_{r, c}))| + |D_E \cdot \mathrm{Round}(P,\mathcal{U}_{r, c})|$.}
        \label{fig:combined_plots_linear_assignment}
        \includegraphics[width = 0.9\textwidth, trim={0 0.4cm 0 1.8cm},clip]{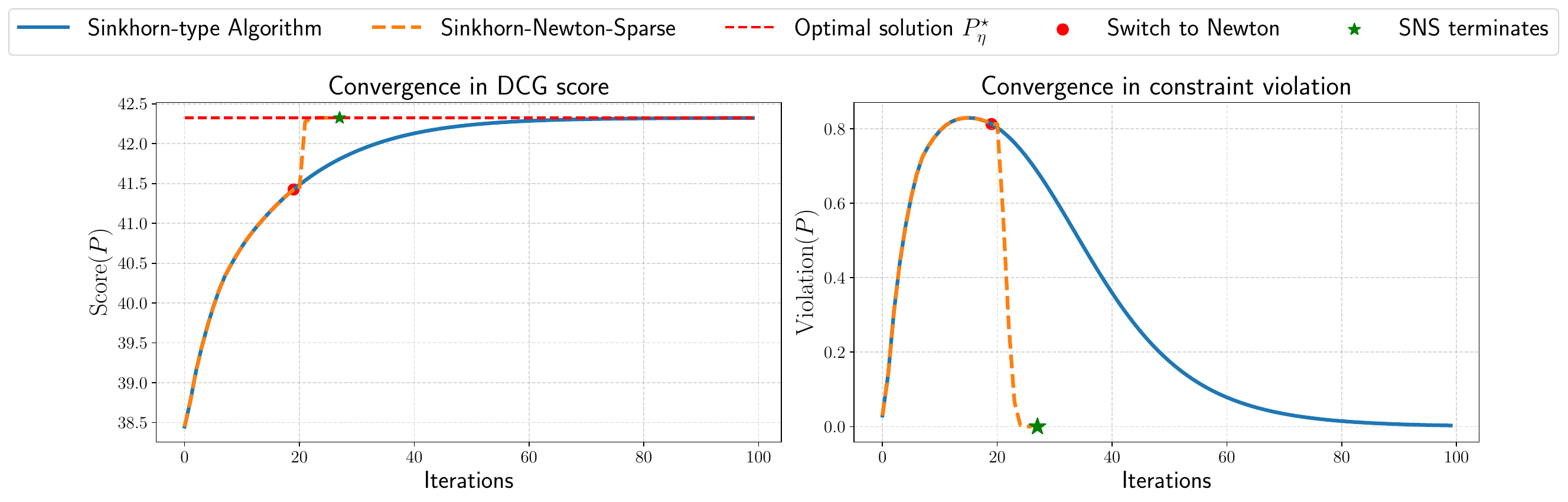}
        \caption{Ranking under constraints. Plot of the proposed Sinkhorn-type algorithm in terms of DCG score and constraint violation. Specifically, constraint violation is defined by $\mathrm{Violation}(P) = |\min(0, D_I \cdot \mathrm{Round}(P,\mathcal{U}_{r, c}))| + |D_E \cdot \mathrm{Round}(P,\mathcal{U}_{r, c})|$.}
        \label{fig:combined_plots_dcg}
\end{figure*}
We conduct numerical experiments to showcase the performance of the proposed Sinkhorn-type algorithm and its acceleration techniques. Let \(\mathcal{U}_{r, c}\) be the set of transport matrices from \(r\) to \(c\). We use the rounding algorithm in \citet{altschuler2017near} to obtain projection into \(\mathcal{U}_{r, c}\). The performance is evaluated through the cost and constraint violation of the transport matrix one obtains through rounding. Specifically, one first uses the
dual variable \((x, y, a)\) to form the intermediate matrix \(P\). Then, one uses the rounding algorithm to get a transport matrix, denoted \(\mathrm{Round}(P, \mathcal{U}_{r, c})\). The cost or score of the transport is evaluated as 
\begin{equation*}
    \begin{aligned}
        &\mathrm{Cost}(P) = C \cdot \mathrm{Round}(P, \mathcal{U}_{r, c}),\\
        &\mathrm{Score}(P) = (-C) \cdot \mathrm{Round}(P, \mathcal{U}_{r, c}).
    \end{aligned}
\end{equation*}
For constraint violation, we use the following metric:
\[\begin{aligned}
    \mathrm{Violation}(P) = 
&\sum_{k=1}^{K}\lvert \min\left(\mathrm{Round}(P, \mathcal{U}_{r, c})\cdot D_k, 0\right)  \rvert\\ + &\sum_{l=1}^{L}\lvert \mathrm{Round}(P, \mathcal{U}_{r, c})\cdot D_{l+K}  \rvert.
\end{aligned}\]
In Proposition \ref{prop: constraint violation bound} in Appendix \ref{appendix: proof of OT linear convergence}, we give an upper bound on \(\mathrm{Violation}(P)\) when one runs the greedy version of Algorithm \ref{alg:1} in Theorem \ref{thm: OT linear convergence}. 
\paragraph{Random assignment problem under constraints}
In the first numerical test, we consider the random assignment problem \citep{mezard1987solution,steele1997probability,aldous2001zeta} with additional inequality and equality constraints. In this setting, we set the problem size of \(n = 500\) and an entropy regularization of \(\eta = 1200\). The source and target vectors are \(c = r = \frac{1}{n}\mathbf{1}\). We consider three \(n \times n\) matrices \(C, D_{I}, D_{E}\), which respectively encode the cost, the inequality constraint, and the equality constraint. We generate the entries of \(C, D_{I}, D_{E}\) by i.i.d. random variables following the distribution \(\mathrm{Unif}([0,1])\). We then set two threshold variables \(t_{I}, t_{E}\) and consider the following optimal transport task:
\begin{equation}\label{eqn: random assignment example}
\begin{aligned}
    &\min_{
\substack{P: P\1=r,\, P^{\top}\1 = c}} &C\cdot P,\\
  &\text{subject to} &D_{I} \cdot P \leq t_{I}, \\
  &  &D_{E} \cdot P = t_E.
\end{aligned}
\end{equation}
The conversion of \eqref{eqn: random assignment example} to the general form (see \eqref{eqn: constrained OT general form}) can be performed by the procedure detailed in Section \ref{sec: variational}. 
Subsequently, we perform the proposed modified Sinkhorn-type algorithm detailed in Algorithm \ref{alg:1}, as well as the accelerated Sinkhorn-Newton-Sparse algorithm in Algorithm \ref{alg:SNS augmented} (see Appendix \ref{appendix: SNS et al}). We set the threshold parameters to be \(t_{I} = t_{E} = \frac{1}{2}\), and it has been verified that the tested instances of \eqref{eqn: random assignment example} is feasible. We test the performance for Algorithm \ref{alg:1} and Algorithm \ref{alg:SNS augmented}. For Algorithm \ref{alg:SNS augmented}, we use \(N_1=20\) Sinkhorn steps for initialization.

In Figure \ref{fig:combined_plots_linear_assignment}, we plot the performance of Algorithm \ref{alg:1} and Algorithm \ref{alg:SNS augmented}.
One can see that the proposed modified Sinkhorn-type algorithm quickly converges to an approximately optimal solution that approximately satisfies the additional constraints. Algorithm \ref{alg:SNS augmented} achieves similar performance under a vastly smaller number of iterations. We remark that Algorithm \ref{alg:SNS augmented} in this case has converged to the optimal solution \(P_{\eta}^{\star}\) with machine accuracy in \(N_1 + N_2 = 25\) iterations. We note that \(P_{\eta}^{\star}\) is obtained by using Newton's method, i.e., running Algorithm \ref{alg:SNS augmented} without sparsification, and the stationarity condition has been verified to be satisfied under machine-epsilon accuracy. In Appendix \ref{appendix: empirical exponential convergence}, we show that Algorithm \ref{alg:1} and Algorithm \ref{alg:SNS augmented} both converge rapidly to the entropic optimal solution \(P_{\eta}^{\star}\).

\paragraph{Ranking under constraints}
In the second numerical example, we consider cost matrices associated with evaluation metrics in ranking problems \citep{liu2009learning,manning2009introduction}. The problem of ranking can be naturally perceived as a transport problem where the transport matrix is analogous to the permutations of items \citep{mena2018learning}. All linear additive ranking metrics, such as precision, recall, and discounted cumulative gain (DCG), are linear functions of the permutation matrix. For example, the DCG metric measures the quality of a ranking \(\sigma: [n] \to [n]\). For a relevance vector \(g \in \R^{n}\) and a discount vector \(v \in \R^{n}\), a DCG score is defined by the following:
\begin{equation*}
    \mathrm{DCG}_{v, g}(\sigma) = \sum_{i = 1}^{n}g_{\sigma(i)}v_{i} =(gv^{\top}) \cdot P 
\end{equation*}
where \(P\) is the permutation matrix associated with \(\sigma\). Thus, optimizing the DCG metric corresponds to an OT instance through the relaxation of the permutation matrix within the Birkhoff polytope.

In practical problems such as e-commerce ranking, there are typically multiple relevance labels in the form of different attributes of an item. Motivated by this, we consider the following constrained OT problem:
\begin{equation}\label{eqn: random dcg example}
\begin{aligned}
    &\max_{
\substack{P: P\1=\mathbf{1},\, P^{\top}\1 = \mathbf{1}}} &(g_{c} v^{\top})\cdot P,\\
  &\text{subject to} &(g_{I} v^{\top}) \cdot P \geq t_{I}, \\
  &  &(g_{E} v^{\top}) \cdot P = t_E,
\end{aligned}
\end{equation}
where \(g_{c}, g_{I}, g_{E}\) are three relevance vectors associated with the cost, the inequality constraint, and the equality constraint, respectively. In accordance with the information retrieval literature, we consider the discount vector \(v\) with \(v_{i} = \frac{1}{\log_{2}(i+1)}\). The entries of \(g_{c}, g_{I}, g_{E}\) are i.i.d. entries simulated from the Rademacher distribution. In \eqref{eqn: random dcg example}, we choose the two threshold variables as \(t_{I} = \frac{1}{n}D_{I} \cdot \mathbf{1}_{n\times n}, t_{E} = \frac{1}{n}D_{E} \cdot \mathbf{1}_{n\times n}\), so that the feasibility set is guaranteed to be non-empty.

In accordance with \eqref{eqn: random assignment example}, we consider a problem size of \(n = 500\) and an entropy regularization of \(\eta = \frac{1200}{500} = 2.4\). 
The performance is plotted in Figure \ref{fig:combined_plots_dcg}, which shows that both Algorithm \ref{alg:1} and Algorithm \ref{alg:SNS augmented} can quickly converge to an approximately optimal solution that approximately satisfies the additional constraints. Furthermore, Algorithm \ref{alg:SNS augmented} in this case has converged to the optimal solution \(P_{\eta}^{\star}\) with machine-epsilon accuracy in under \(25\) iterations. Appendix \ref{appendix: empirical exponential convergence} shows rapid convergence to \(P_{\eta}^{\star}\) for both algorithms.

\paragraph{Pareto front for geometric transport problem under Euclidean distance and Manhattan distance}
In the third numerical example, we consider the trade-off between two transport costs of a geometric nature. In this experiment, we run the task of Pareto front profiling between the Manhattan distance cost \(c_1(x, y) = \lVert x- y \rVert_1\) and the Euclidean distance transport cost \(c_2(x, y) = \lVert x- y \rVert_2^2\). As a byproduct, one obtains an interpolation between the Wasserstein \(W_1\) transport plan under the \(l_1\) distance and the \(W_2\) transport plan under the \(l_2\) distance. Let \(C_1, C_2\) be the cost matrix associated with the Manhattan distance and the Euclidean distance \citep{villani2009optimal}. To do so, we consider the following optimization task:
\begin{equation}\label{eqn: Manhattan vs Euclidean}
\begin{aligned}
    &\min_{
\substack{P: P\1=r,\, P^{\top}\1 = c}} &C_1\cdot P,\\
  &\text{subject to} &C_{2} \cdot P \leq t^2,
\end{aligned}
\end{equation}
where \(t\) is set so that the feasibility set is not empty. We let \(t_{\mathrm{min}}\) be the \(W_2\) distance between the source and target vectors, and let \(t_{\mathrm{max}}\) be the Euclidean transport cost of a transport plan which minimizes Manhattan distance transport cost. We remark that \(t_{\mathrm{min}}\) and \(t_{\mathrm{max}}\) can be obtained through conventional OT algorithms. By tracing the value of \(t\) in \([t_{\mathrm{min}}, t_{\mathrm{max}}]\) and solving \eqref{eqn: Manhattan vs Euclidean} through entropic regularization, one can effectively obtain a Pareto front between Euclidean distance transport cost and Manhattan distance transport cost. 

Similar to previous cases, Algorithm \ref{alg:SNS augmented} is able to reach optimal solution \(P_{\eta}^{\star}\) within machine-epsilon accuracy. Thus, we run multiple instances of Algorithm \ref{alg:SNS augmented} for \(t \in [t_{\mathrm{min}}, t_{\mathrm{max}}]\) and \(\eta = 10, 100, 1000\). Then, we plot the Pareto front formed by \(P_{\eta}^{\star}\) for every choice of \(\eta\). The true Pareto front can be obtained through running Algorithm \ref{alg:SNS+annealing} in Appendix \ref{appendix: SNS et al} for \(\eta_{\mathrm{target}} = 8*10^4\), and convergence is verified through checking the first order derivative reaches machine accuracy. 

Similar to \citet{cuturi2013sinkhorn}, we illustrate the procedure through optimal transport on the MNIST dataset. We pick two images, which are converted to a vector of intensities on the $28 \times 28$ pixel grid and normalized to sum to 1. The entry corresponding to the \((i_1,i_2)\)-th pixel is conceptualized as the point \((i_1/28, i_2/28) \in \mathbb{R}^{2}\). In Figure \ref{fig:combined_plots_Pareto_front}, we see that the Pareto front formed by increasing \(\eta\) indeed converges to the true Pareto front.
\begin{figure}
    \centering
    \includegraphics[width = 0.45\textwidth, trim={0 0.2cm 0 0},clip]{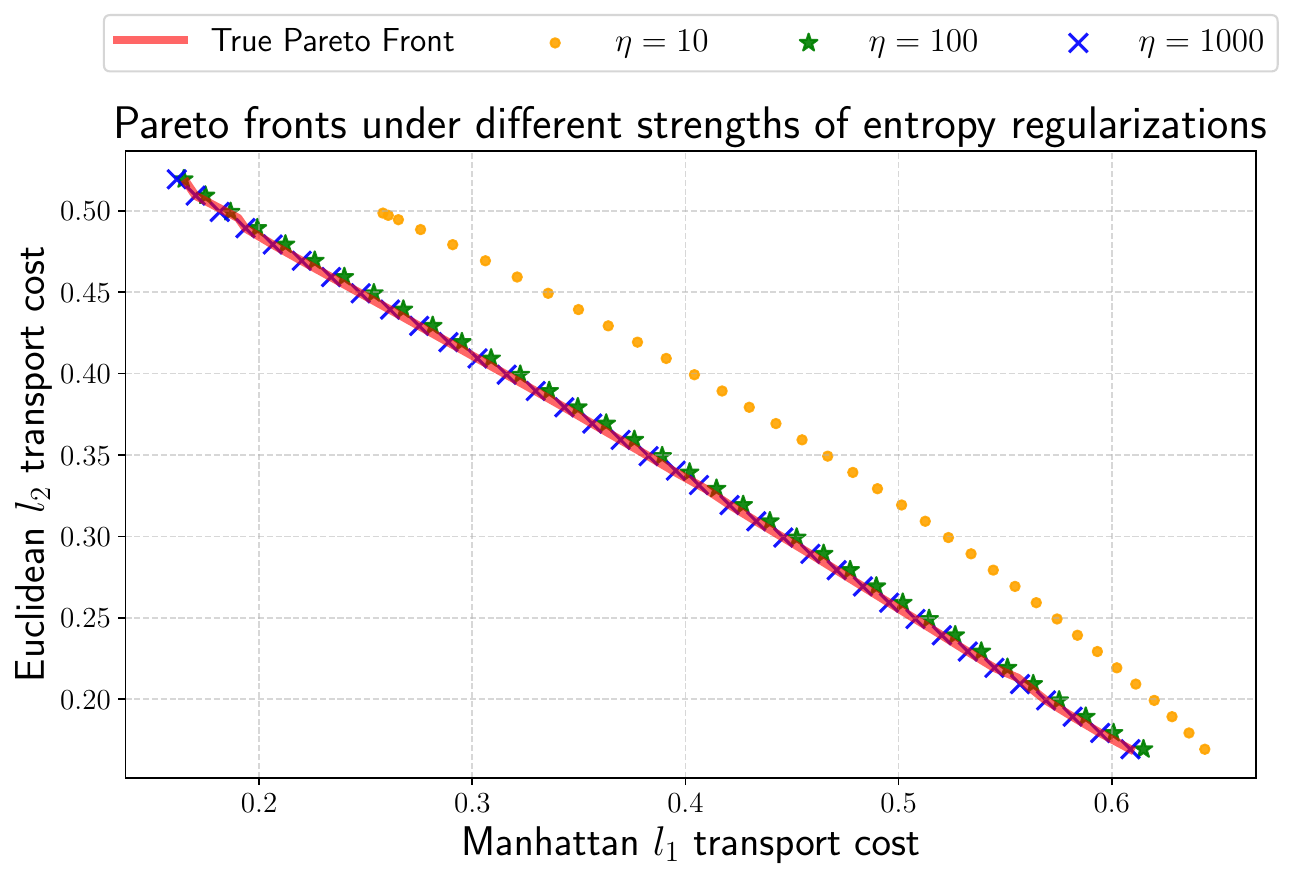}
    \caption{Pareto front profiling of Euclidean distance versus Manhattan distance.}
    \label{fig:combined_plots_Pareto_front}
\end{figure}

\section{Conclusion}
We introduce an entropic formulation of optimal transport with a combination of additional equality and inequality constraints. We propose a Sinkhorn-type algorithm that has a novel constraint dual variable update step. We provide preliminary results on the approximation error of the entropic formulation and the convergence of the Sinkhorn-type algorithm, while more work remains to be done for the convergence property of Algorithm \ref{alg:1} in terms of convergence to the optimal entropic solution \(P_{\eta}^{\star}\). Moreover, we provide a detailed discussion on improving the proposed Sinkhorn-type algorithm with acceleration techniques, especially Sinkhorn-Newton-Sparse and entropy regularization scheduling. The proposed work enables one to obtain approximately optimal solutions in more complicated OT instances efficiently. We contend that this work has the potential to be a vital tool in the field of optimal transport under constraint and in attracting the use of constrained optimal transport in machine learning.

\bibliography{references}

\begin{thebibliography}{73}
\providecommand{\natexlab}[1]{#1}
\providecommand{\url}[1]{\texttt{#1}}
\expandafter\ifx\csname urlstyle\endcsname\relax
  \providecommand{\doi}[1]{doi: #1}\else
  \providecommand{\doi}{doi: \begingroup \urlstyle{rm}\Url}\fi

\bibitem[Aldous(2001)]{aldous2001zeta}
David~J Aldous.
\newblock The $\zeta$ (2) limit in the random assignment problem.
\newblock \emph{Random Structures \& Algorithms}, 18\penalty0 (4):\penalty0 381--418, 2001.

\bibitem[Altschuler et~al.(2017)Altschuler, Niles-Weed, and Rigollet]{altschuler2017near}
Jason Altschuler, Jonathan Niles-Weed, and Philippe Rigollet.
\newblock Near-linear time approximation algorithms for optimal transport via sinkhorn iteration.
\newblock \emph{Advances in neural information processing systems}, 30, 2017.

\bibitem[Altschuler et~al.(2019)Altschuler, Bach, Rudi, and Niles-Weed]{altschuler2019massively}
Jason Altschuler, Francis Bach, Alessandro Rudi, and Jonathan Niles-Weed.
\newblock Massively scalable sinkhorn distances via the nystr{\"o}m method.
\newblock \emph{Advances in neural information processing systems}, 32, 2019.

\bibitem[Alvarez-Melis et~al.(2018)Alvarez-Melis, Jaakkola, and Jegelka]{alvarez2018structured}
David Alvarez-Melis, Tommi Jaakkola, and Stefanie Jegelka.
\newblock Structured optimal transport.
\newblock In \emph{International conference on artificial intelligence and statistics}, pages 1771--1780. PMLR, 2018.

\bibitem[Arjovsky et~al.(2017)Arjovsky, Chintala, and Bottou]{arjovsky2017wasserstein}
Martin Arjovsky, Soumith Chintala, and L{\'e}on Bottou.
\newblock Wasserstein generative adversarial networks.
\newblock In \emph{International conference on machine learning}, pages 214--223. PMLR, 2017.

\bibitem[Beiglb{\"o}ck et~al.(2013)Beiglb{\"o}ck, Henry-Labordere, and Penkner]{beiglbock2013model}
Mathias Beiglb{\"o}ck, Pierre Henry-Labordere, and Friedrich Penkner.
\newblock Model-independent bounds for option prices—a mass transport approach.
\newblock \emph{Finance and Stochastics}, 17:\penalty0 477--501, 2013.

\bibitem[Benamou et~al.(2015)Benamou, Carlier, Cuturi, Nenna, and Peyr{\'e}]{benamou2015iterative}
Jean-David Benamou, Guillaume Carlier, Marco Cuturi, Luca Nenna, and Gabriel Peyr{\'e}.
\newblock Iterative bregman projections for regularized transportation problems.
\newblock \emph{SIAM Journal on Scientific Computing}, 37\penalty0 (2):\penalty0 A1111--A1138, 2015.

\bibitem[Bonneel et~al.(2016)Bonneel, Peyr{\'e}, and Cuturi]{bonneel2016wasserstein}
Nicolas Bonneel, Gabriel Peyr{\'e}, and Marco Cuturi.
\newblock Wasserstein barycentric coordinates: histogram regression using optimal transport.
\newblock \emph{ACM Trans. Graph.}, 35\penalty0 (4):\penalty0 71--1, 2016.

\bibitem[Bousquet et~al.(2017)Bousquet, Gelly, Tolstikhin, Simon-Gabriel, and Schoelkopf]{bousquet2017optimal}
Olivier Bousquet, Sylvain Gelly, Ilya Tolstikhin, Carl-Johann Simon-Gabriel, and Bernhard Schoelkopf.
\newblock From optimal transport to generative modeling: the vegan cookbook.
\newblock \emph{arXiv preprint arXiv:1705.07642}, 2017.

\bibitem[Boyd and Vandenberghe(2004)]{boyd2004convex}
Stephen~P Boyd and Lieven Vandenberghe.
\newblock \emph{Convex optimization}.
\newblock Cambridge university press, 2004.

\bibitem[Buttazzo et~al.(2012)Buttazzo, De~Pascale, and Gori-Giorgi]{buttazzo2012optimal}
Giuseppe Buttazzo, Luigi De~Pascale, and Paola Gori-Giorgi.
\newblock Optimal-transport formulation of electronic density-functional theory.
\newblock \emph{Physical Review A}, 85\penalty0 (6):\penalty0 062502, 2012.

\bibitem[Chen et~al.(2023)Chen, Chen, Liu, Peng, and Ramaswami]{chen2023exponential}
Jingbang Chen, Li~Chen, Yang~P Liu, Richard Peng, and Arvind Ramaswami.
\newblock Exponential convergence of sinkhorn under regularization scheduling.
\newblock In \emph{SIAM Conference on Applied and Computational Discrete Algorithms (ACDA23)}, pages 180--188. SIAM, 2023.

\bibitem[Chen et~al.(2020)Chen, Gan, Cheng, Li, Carin, and Liu]{chen2020graph}
Liqun Chen, Zhe Gan, Yu~Cheng, Linjie Li, Lawrence Carin, and Jingjing Liu.
\newblock Graph optimal transport for cross-domain alignment.
\newblock In \emph{International Conference on Machine Learning}, pages 1542--1553. PMLR, 2020.

\bibitem[Courty et~al.(2017)Courty, Flamary, Habrard, and Rakotomamonjy]{courty2017joint}
Nicolas Courty, R{\'e}mi Flamary, Amaury Habrard, and Alain Rakotomamonjy.
\newblock Joint distribution optimal transportation for domain adaptation.
\newblock \emph{Advances in neural information processing systems}, 30, 2017.

\bibitem[Cuturi(2013)]{cuturi2013sinkhorn}
Marco Cuturi.
\newblock Sinkhorn distances: Lightspeed computation of optimal transport.
\newblock \emph{Advances in neural information processing systems}, 26, 2013.

\bibitem[Deriche(1993)]{deriche1993recursively}
Rachid Deriche.
\newblock \emph{Recursively implementating the Gaussian and its derivatives}.
\newblock PhD thesis, INRIA, 1993.

\bibitem[Deshpande et~al.(2019)Deshpande, Hu, Sun, Pyrros, Siddiqui, Koyejo, Zhao, Forsyth, and Schwing]{deshpande2019max}
Ishan Deshpande, Yuan-Ting Hu, Ruoyu Sun, Ayis Pyrros, Nasir Siddiqui, Sanmi Koyejo, Zhizhen Zhao, David Forsyth, and Alexander~G Schwing.
\newblock Max-sliced wasserstein distance and its use for gans.
\newblock In \emph{Proceedings of the IEEE/CVF Conference on Computer Vision and Pattern Recognition}, pages 10648--10656, 2019.

\bibitem[Dolinsky and Soner(2014)]{dolinsky2014martingale}
Yan Dolinsky and H~Mete Soner.
\newblock Martingale optimal transport and robust hedging in continuous time.
\newblock \emph{Probability Theory and Related Fields}, 160\penalty0 (1-2):\penalty0 391--427, 2014.

\bibitem[Dvurechensky et~al.(2018)Dvurechensky, Gasnikov, and Kroshnin]{dvurechensky2018computational}
Pavel Dvurechensky, Alexander Gasnikov, and Alexey Kroshnin.
\newblock Computational optimal transport: Complexity by accelerated gradient descent is better than by sinkhorn’s algorithm.
\newblock In \emph{International conference on machine learning}, pages 1367--1376. PMLR, 2018.

\bibitem[Fang(1992)]{fang1992unconstrained}
S~C Fang.
\newblock An unconstrained convex programming view of linear programming.
\newblock \emph{Zeitschrift f{\"u}r Operations Research}, 36:\penalty0 149--161, 1992.

\bibitem[Fatras et~al.(2021)Fatras, S{\'e}journ{\'e}, Flamary, and Courty]{fatras2021unbalanced}
Kilian Fatras, Thibault S{\'e}journ{\'e}, R{\'e}mi Flamary, and Nicolas Courty.
\newblock Unbalanced minibatch optimal transport; applications to domain adaptation.
\newblock In \emph{International Conference on Machine Learning}, pages 3186--3197. PMLR, 2021.

\bibitem[Fernando et~al.(2013)Fernando, Habrard, Sebban, and Tuytelaars]{Fernando_2013_ICCV}
Basura Fernando, Amaury Habrard, Marc Sebban, and Tinne Tuytelaars.
\newblock Unsupervised visual domain adaptation using subspace alignment.
\newblock In \emph{Proceedings of the IEEE International Conference on Computer Vision (ICCV)}, December 2013.

\bibitem[Galichon et~al.(2014)Galichon, Henry-Labord{\`e}re, and Touzi]{galichon2014stochastic}
A.~Galichon, P.~Henry-Labord{\`e}re, and N.~Touzi.
\newblock {A stochastic control approach to no-arbitrage bounds given marginals, with an application to lookback options}.
\newblock \emph{The Annals of Applied Probability}, 24\penalty0 (1):\penalty0 312 -- 336, 2014.

\bibitem[Gangbo and Swiech(1998)]{gangbo1998optimal}
Wilfrid Gangbo and Andrzej Swiech.
\newblock Optimal maps for the multidimensional monge-kantorovich problem.
\newblock \emph{Communications on Pure and Applied Mathematics: A Journal Issued by the Courant Institute of Mathematical Sciences}, 51\penalty0 (1):\penalty0 23--45, 1998.

\bibitem[Genevay et~al.(2016)Genevay, Cuturi, Peyr{\'e}, and Bach]{genevay2016stochastic}
Aude Genevay, Marco Cuturi, Gabriel Peyr{\'e}, and Francis Bach.
\newblock Stochastic optimization for large-scale optimal transport.
\newblock \emph{Advances in neural information processing systems}, 29, 2016.

\bibitem[Genevay et~al.(2017)Genevay, Peyr{\'e}, and Cuturi]{genevay2017gan}
Aude Genevay, Gabriel Peyr{\'e}, and Marco Cuturi.
\newblock Gan and vae from an optimal transport point of view.
\newblock \emph{arXiv preprint arXiv:1706.01807}, 2017.

\bibitem[Genevay et~al.(2018)Genevay, Peyr{\'e}, and Cuturi]{genevay2018learning}
Aude Genevay, Gabriel Peyr{\'e}, and Marco Cuturi.
\newblock Learning generative models with sinkhorn divergences.
\newblock In \emph{International Conference on Artificial Intelligence and Statistics}, pages 1608--1617. PMLR, 2018.

\bibitem[Genevay et~al.(2019)Genevay, Chizat, Bach, Cuturi, and Peyr{\'e}]{genevay2019sample}
Aude Genevay, L{\'e}naic Chizat, Francis Bach, Marco Cuturi, and Gabriel Peyr{\'e}.
\newblock Sample complexity of sinkhorn divergences.
\newblock In \emph{The 22nd international conference on artificial intelligence and statistics}, pages 1574--1583. PMLR, 2019.

\bibitem[Golub and Van~Loan(2013)]{golub2013matrix}
Gene~H Golub and Charles~F Van~Loan.
\newblock \emph{Matrix computations}.
\newblock JHU press, 2013.

\bibitem[Guo and Ob{\l}{\'o}j(2019)]{guo2019computational}
Gaoyue Guo and Jan Ob{\l}{\'o}j.
\newblock Computational methods for martingale optimal transport problems.
\newblock \emph{The Annals of Applied Probability}, 29\penalty0 (6):\penalty0 3311--3347, 2019.

\bibitem[Huguet et~al.(2023)Huguet, Tong, Zapatero, Tape, Wolf, and Krishnaswamy]{huguet2023geodesic}
Guillaume Huguet, Alexander Tong, Mar{\'\i}a~Ramos Zapatero, Christopher~J Tape, Guy Wolf, and Smita Krishnaswamy.
\newblock Geodesic sinkhorn for fast and accurate optimal transport on manifolds.
\newblock \emph{ArXiv}, 2023.

\bibitem[Huynh et~al.(2020)Huynh, Zhao, and Phung]{huynh2020otlda}
Viet Huynh, He~Zhao, and Dinh Phung.
\newblock Otlda: A geometry-aware optimal transport approach for topic modeling.
\newblock \emph{Advances in Neural Information Processing Systems}, 33:\penalty0 18573--18582, 2020.

\bibitem[Jitkrittum et~al.(2016)Jitkrittum, Szab{\'o}, Chwialkowski, and Gretton]{jitkrittum2016interpretable}
Wittawat Jitkrittum, Zolt{\'a}n Szab{\'o}, Kacper~P Chwialkowski, and Arthur Gretton.
\newblock Interpretable distribution features with maximum testing power.
\newblock \emph{Advances in Neural Information Processing Systems}, 29, 2016.

\bibitem[Kemertas et~al.(2023)Kemertas, Jepson, and Farahmand]{kemertas2023efficient}
Mete Kemertas, Allan~D Jepson, and Amir-massoud Farahmand.
\newblock Efficient and accurate optimal transport with mirror descent and conjugate gradients.
\newblock \emph{arXiv preprint arXiv:2307.08507}, 2023.

\bibitem[Khoo et~al.(2020)Khoo, Lin, Lindsey, and Ying]{khoo2020semidefinite}
Yuehaw Khoo, Lin Lin, Michael Lindsey, and Lexing Ying.
\newblock Semidefinite relaxation of multimarginal optimal transport for strictly correlated electrons in second quantization.
\newblock \emph{SIAM Journal on Scientific Computing}, 42\penalty0 (6):\penalty0 B1462--B1489, 2020.

\bibitem[Kolouri et~al.(2017)Kolouri, Park, Thorpe, Slepcev, and Rohde]{kolouri2017optimal}
Soheil Kolouri, Se~Rim Park, Matthew Thorpe, Dejan Slepcev, and Gustavo~K Rohde.
\newblock Optimal mass transport: Signal processing and machine-learning applications.
\newblock \emph{IEEE signal processing magazine}, 34\penalty0 (4):\penalty0 43--59, 2017.

\bibitem[Korman and McCann(2015)]{korman2015optimal}
Jonathan Korman and Robert McCann.
\newblock Optimal transportation with capacity constraints.
\newblock \emph{Transactions of the American Mathematical Society}, 367\penalty0 (3):\penalty0 1501--1521, 2015.

\bibitem[Korman and McCann(2013)]{korman2013insights}
Jonathan Korman and Robert~J McCann.
\newblock Insights into capacity-constrained optimal transport.
\newblock \emph{Proceedings of the National Academy of Sciences}, 110\penalty0 (25):\penalty0 10064--10067, 2013.

\bibitem[Lakshmanan et~al.(2022)Lakshmanan, Pichler, and Potts]{lakshmanan2022nonequispaced}
Rajmadan Lakshmanan, Alois Pichler, and Daniel Potts.
\newblock Nonequispaced fast fourier transform boost for the sinkhorn algorithm.
\newblock \emph{arXiv preprint arXiv:2201.07524}, 2022.

\bibitem[Lei et~al.(2019)Lei, Su, Cui, Yau, and Gu]{lei2019geometric}
Na~Lei, Kehua Su, Li~Cui, Shing-Tung Yau, and Xianfeng~David Gu.
\newblock A geometric view of optimal transportation and generative model.
\newblock \emph{Computer Aided Geometric Design}, 68:\penalty0 1--21, 2019.

\bibitem[Li et~al.(2023)Li, Yu, Li, and Meng]{li2023importance}
Mengyu Li, Jun Yu, Tao Li, and Cheng Meng.
\newblock Importance sparsification for sinkhorn algorithm.
\newblock \emph{arXiv preprint arXiv:2306.06581}, 2023.

\bibitem[Lin et~al.(2019)Lin, Ho, and Jordan]{lin2019efficient}
Tianyi Lin, Nhat Ho, and Michael Jordan.
\newblock On efficient optimal transport: An analysis of greedy and accelerated mirror descent algorithms.
\newblock In \emph{International Conference on Machine Learning}, pages 3982--3991. PMLR, 2019.

\bibitem[Linial et~al.(1998)Linial, Samorodnitsky, and Wigderson]{linial1998deterministic}
Nathan Linial, Alex Samorodnitsky, and Avi Wigderson.
\newblock A deterministic strongly polynomial algorithm for matrix scaling and approximate permanents.
\newblock In \emph{Proceedings of the thirtieth annual ACM symposium on Theory of computing}, pages 644--652, 1998.

\bibitem[Liu et~al.(2009)]{liu2009learning}
Tie-Yan Liu et~al.
\newblock Learning to rank for information retrieval.
\newblock \emph{Foundations and Trends{\textregistered} in Information Retrieval}, 3\penalty0 (3):\penalty0 225--331, 2009.

\bibitem[Luenberger et~al.(1984)Luenberger, Ye, et~al.]{luenberger1984linear}
David~G Luenberger, Yinyu Ye, et~al.
\newblock \emph{Linear and nonlinear programming}, volume~2.
\newblock Springer, 1984.

\bibitem[Luise et~al.(2018)Luise, Rudi, Pontil, and Ciliberto]{luise2018differential}
Giulia Luise, Alessandro Rudi, Massimiliano Pontil, and Carlo Ciliberto.
\newblock Differential properties of sinkhorn approximation for learning with wasserstein distance.
\newblock \emph{Advances in Neural Information Processing Systems}, 31, 2018.

\bibitem[Manning(2009)]{manning2009introduction}
Christopher~D Manning.
\newblock \emph{An introduction to information retrieval}.
\newblock Cambridge university press, 2009.

\bibitem[Mena et~al.(2018)Mena, Belanger, Linderman, and Snoek]{mena2018learning}
Gonzalo Mena, David Belanger, Scott Linderman, and Jasper Snoek.
\newblock Learning latent permutations with gumbel-sinkhorn networks.
\newblock \emph{arXiv preprint arXiv:1802.08665}, 2018.

\bibitem[M{\'e}zard and Parisi(1987)]{mezard1987solution}
Marc M{\'e}zard and Giorgio Parisi.
\newblock On the solution of the random link matching problems.
\newblock \emph{Journal de Physique}, 48\penalty0 (9):\penalty0 1451--1459, 1987.

\bibitem[Nesterov(1983)]{nesterov1983method}
Yurii~Evgen'evich Nesterov.
\newblock A method of solving a convex programming problem with convergence rate o$\backslash$bigl(k\^{}2$\backslash$bigr).
\newblock In \emph{Doklady Akademii Nauk}, volume 269, pages 543--547. Russian Academy of Sciences, 1983.

\bibitem[Nguyen et~al.(2022)Nguyen, Nguyen, Pham, Ho, et~al.]{nguyen2022improving}
Khai Nguyen, Dang Nguyen, Tung Pham, Nhat Ho, et~al.
\newblock Improving mini-batch optimal transport via partial transportation.
\newblock In \emph{International Conference on Machine Learning}, pages 16656--16690. PMLR, 2022.

\bibitem[Nguyen et~al.(2021)Nguyen, Le, Zhao, Tran, Nguyen, and Phung]{nguyen2021most}
Tuan Nguyen, Trung Le, He~Zhao, Quan~Hung Tran, Truyen Nguyen, and Dinh Phung.
\newblock Most: Multi-source domain adaptation via optimal transport for student-teacher learning.
\newblock In \emph{Uncertainty in Artificial Intelligence}, pages 225--235. PMLR, 2021.

\bibitem[Oneto et~al.(2020)Oneto, Donini, Luise, Ciliberto, Maurer, and Pontil]{oneto2020exploiting}
Luca Oneto, Michele Donini, Giulia Luise, Carlo Ciliberto, Andreas Maurer, and Massimiliano Pontil.
\newblock Exploiting mmd and sinkhorn divergences for fair and transferable representation learning.
\newblock \emph{Advances in Neural Information Processing Systems}, 33:\penalty0 15360--15370, 2020.

\bibitem[Onken et~al.(2021)Onken, Fung, Li, and Ruthotto]{onken2021ot}
Derek Onken, Samy~Wu Fung, Xingjian Li, and Lars Ruthotto.
\newblock Ot-flow: Fast and accurate continuous normalizing flows via optimal transport.
\newblock In \emph{Proceedings of the AAAI Conference on Artificial Intelligence}, volume~35, pages 9223--9232, 2021.

\bibitem[Pass(2015)]{pass2015multi}
Brendan Pass.
\newblock Multi-marginal optimal transport: theory and applications.
\newblock \emph{ESAIM: Mathematical Modelling and Numerical Analysis-Mod{\'e}lisation Math{\'e}matique et Analyse Num{\'e}rique}, 49\penalty0 (6):\penalty0 1771--1790, 2015.

\bibitem[Patrini et~al.(2020)Patrini, Van~den Berg, Forre, Carioni, Bhargav, Welling, Genewein, and Nielsen]{patrini2020sinkhorn}
Giorgio Patrini, Rianne Van~den Berg, Patrick Forre, Marcello Carioni, Samarth Bhargav, Max Welling, Tim Genewein, and Frank Nielsen.
\newblock Sinkhorn autoencoders.
\newblock In \emph{Uncertainty in Artificial Intelligence}, pages 733--743. PMLR, 2020.

\bibitem[Peyr{\'e} et~al.(2019)Peyr{\'e}, Cuturi, et~al.]{peyre2017computational}
Gabriel Peyr{\'e}, Marco Cuturi, et~al.
\newblock Computational optimal transport: With applications to data science.
\newblock \emph{Foundations and Trends{\textregistered} in Machine Learning}, 11\penalty0 (5-6):\penalty0 355--607, 2019.

\bibitem[Redko et~al.(2017)Redko, Habrard, and Sebban]{redko2017theoretical}
Ievgen Redko, Amaury Habrard, and Marc Sebban.
\newblock Theoretical analysis of domain adaptation with optimal transport.
\newblock In \emph{Machine Learning and Knowledge Discovery in Databases: European Conference, ECML PKDD 2017, Skopje, Macedonia, September 18--22, 2017, Proceedings, Part II 10}, pages 737--753. Springer, 2017.

\bibitem[Salimans et~al.(2018)Salimans, Zhang, Radford, and Metaxas]{salimans2018improving}
Tim Salimans, Han Zhang, Alec Radford, and Dimitris Metaxas.
\newblock Improving gans using optimal transport.
\newblock \emph{arXiv preprint arXiv:1803.05573}, 2018.

\bibitem[Sandler and Lindenbaum(2011)]{sandler2011nonnegative}
Roman Sandler and Michael Lindenbaum.
\newblock Nonnegative matrix factorization with earth mover's distance metric for image analysis.
\newblock \emph{IEEE Transactions on Pattern Analysis and Machine Intelligence}, 33\penalty0 (8):\penalty0 1590--1602, 2011.

\bibitem[Sanjabi et~al.(2018)Sanjabi, Ba, Razaviyayn, and Lee]{sanjabi2018convergence}
Maziar Sanjabi, Jimmy Ba, Meisam Razaviyayn, and Jason~D Lee.
\newblock On the convergence and robustness of training gans with regularized optimal transport.
\newblock \emph{Advances in Neural Information Processing Systems}, 31, 2018.

\bibitem[Scetbon and Cuturi(2020)]{scetbon2020linear}
Meyer Scetbon and Marco Cuturi.
\newblock Linear time sinkhorn divergences using positive features.
\newblock \emph{Advances in Neural Information Processing Systems}, 33:\penalty0 13468--13480, 2020.

\bibitem[Sinkhorn(1964)]{sinkhorn1964relationship}
Richard Sinkhorn.
\newblock A relationship between arbitrary positive matrices and doubly stochastic matrices.
\newblock \emph{The annals of mathematical statistics}, 35\penalty0 (2):\penalty0 876--879, 1964.

\bibitem[Solomon et~al.(2015)Solomon, De~Goes, Peyr{\'e}, Cuturi, Butscher, Nguyen, Du, and Guibas]{solomon2015convolutional}
Justin Solomon, Fernando De~Goes, Gabriel Peyr{\'e}, Marco Cuturi, Adrian Butscher, Andy Nguyen, Tao Du, and Leonidas Guibas.
\newblock Convolutional wasserstein distances: Efficient optimal transportation on geometric domains.
\newblock \emph{ACM Transactions on Graphics (ToG)}, 34\penalty0 (4):\penalty0 1--11, 2015.

\bibitem[Steele(1997)]{steele1997probability}
J~Michael Steele.
\newblock \emph{Probability theory and combinatorial optimization}.
\newblock SIAM, 1997.

\bibitem[Tan and Touzi(2013)]{tan2013optimal}
Xiaolu Tan and Nizar Touzi.
\newblock {Optimal transportation under controlled stochastic dynamics}.
\newblock \emph{The Annals of Probability}, 41\penalty0 (5):\penalty0 3201 -- 3240, 2013.

\bibitem[Tang et~al.(2024)Tang, Shavlovsky, Rahmanian, Tardini, Thekumparampil, Xiao, and Ying]{tang2024accelerating}
Xun Tang, Michael Shavlovsky, Holakou Rahmanian, Elisa Tardini, Kiran~Koshy Thekumparampil, Tesi Xiao, and Lexing Ying.
\newblock Accelerating sinkhorn algorithm with sparse newton iterations.
\newblock \emph{To appear in ICLR 2024}, 2024.

\bibitem[Turrisi et~al.(2022)Turrisi, Flamary, Rakotomamonjy, and Pontil]{turrisi2022multi}
Rosanna Turrisi, R{\'e}mi Flamary, Alain Rakotomamonjy, and Massimiliano Pontil.
\newblock Multi-source domain adaptation via weighted joint distributions optimal transport.
\newblock In \emph{Uncertainty in Artificial Intelligence}, pages 1970--1980. PMLR, 2022.

\bibitem[Vayer et~al.(2018)Vayer, Chapel, Flamary, Tavenard, and Courty]{vayer2018optimal}
Titouan Vayer, Laetitia Chapel, R{\'e}mi Flamary, Romain Tavenard, and Nicolas Courty.
\newblock Optimal transport for structured data with application on graphs.
\newblock \emph{arXiv preprint arXiv:1805.09114}, 2018.

\bibitem[Villani et~al.(2009)]{villani2009optimal}
C{\'e}dric Villani et~al.
\newblock \emph{Optimal transport: old and new}, volume 338.
\newblock Springer, 2009.

\bibitem[Weed(2018)]{weed2018explicit}
Jonathan Weed.
\newblock An explicit analysis of the entropic penalty in linear programming.
\newblock In \emph{Conference On Learning Theory}, pages 1841--1855. PMLR, 2018.

\bibitem[Xu et~al.(2022)Xu, Wen, Hu, and Yang]{xu2022unsupervised}
Yingxue Xu, Guihua Wen, Yang Hu, and Pei Yang.
\newblock Unsupervised domain adaptation via deep hierarchical optimal transport.
\newblock \emph{arXiv preprint arXiv:2211.11424}, 2022.

\bibitem[Yule(1912)]{yule1912methods}
G~Udny Yule.
\newblock On the methods of measuring association between two attributes.
\newblock \emph{Journal of the Royal Statistical Society}, 75\penalty0 (6):\penalty0 579--652, 1912.

\end{thebibliography}
\bibliographystyle{plainnat}

\newpage
\appendix
\onecolumn
\section{Proof of Theorem \ref{thm: exp convergence OT}}\label{appendix: proof of OT exp convergence}

\begin{definition}\label{defn: notation for LP vertex}
    Let \(\mathcal{S}\) be the constraint set defined in \eqref{eqn: constrained OT general form}. Define \(\mathcal{P}\) as the polyhedron formed by the transport matrix, i.e. \[\mathcal{P} := \{P \mid P\1=r , P^{\top}\1 = c, P \geq 0, P \in \mathcal{S}\}.\] The symbol \(\mathcal{V}\) denotes the set of vertices of \(\mathcal{P}\). The symbol \(\mathcal{O}\) 
    stands for the set of optimal vertex solutions, i.e.
    \begin{equation}\label{eqn: LP}
        \mathcal{O} := \argmin_{P \in \mathcal{V}} C\cdot P.
    \end{equation}
    The symbol \(\Delta\) denotes the vertex optimality gap \[\Delta = \min_{Q \in \mathcal{V} - \mathcal{O}}Q \cdot C - \min_{P \in \mathcal{O}}P \cdot C.\]
\end{definition}

We can now finish the proof. 
\begin{proof}
    This convergence result is mainly due to the application of Corollary 9 in \citet{weed2018explicit} to this case. We define another polyhedron \(\mathcal{Q}\) as follows:
    \[\mathcal{Q} := \{(P, s) \mid P \in \mathcal{P}, \forall k \in [K], s_k = D_k \cdot P\}.\]
    Let \(P_{\eta}^{\star}\) be as defined in the statement, and for \(k = 1, \ldots, K\) we define \(s_{\eta; k}^{\star} = D_k \cdot P_{\eta}^{\star}\). We use \(R_1\) and \(R_{H}\) to denote the \(l_{1}\) and entropic radius of \(\mathcal{Q}\) in the sense defined in \citet{weed2018explicit}.
    It is easy to see that for \(R_1\) one has
    \[
    1 \leq R_{1} = 1 + \max_{P \in \mathcal{P} }\sum_{k = 1}^{K}P \cdot D_k \leq 1 + K,
    \]
    where the second inequality used Holder's inequality and the assumption that \(\lVert D_{k} \rVert_{\infty} \leq 1\) 

    For \(R_{H}\), one similarly has 
    \begin{equation*}
        \begin{aligned}
            R_{H} = &\max_{(s, P), (s', P') \in \mathcal{Q}}\sum_{ij}\left(p_{ij}\log(p_{ij}) - p'_{ij}\log(p'_{ij}) \right) + \sum_{k}\left(s_{k}\log(s_{k}) - s'_{k}\log(s'_{k})\right)\\
            \leq &\left(\max_{P, P' \in \mathcal{P}}\sum_{ij}\left(p_{ij}\log(p_{ij}) - p'_{ij}\log(p'_{ij}) \right)\right) + 
            \left(
            \max_{P, P' \in \mathcal{P}}\sum_{k}(P \cdot D_k)\log(P \cdot D_k) - (P' \cdot D_k)\log(P' \cdot D_k) \right)\\
            \leq & \log(n^2) + K/e,
        \end{aligned}
    \end{equation*}
    where the second equality holds because \(H(P)\in [0, \log{(n^2)}]\) and \(P \cdot D_k\log(P \cdot D_k) \in [-1/e, 0]\). 

    For \(\eta \geq \frac{(K+1)(1 + \ln (4 n^2 (K+1))}{\Delta} > \frac{R_1 + R_H}{\Delta}\), one has
    \begin{align*}
        \lVert P^{\star} - P^{\star}_{\eta} \rVert_{1} \leq  &\lVert (P^{\star}, s^{\star}) - (P^{\star}_{\eta}, s_{\eta}^{\star}) \rVert_{1} \\
        \leq &2R_1 \exp\left(-\eta\frac{\Delta}{R_1} + 1 + \frac{R_H}{R_1}\right) \\
        = & 2R_1 \exp\left(\frac{R_H - \eta\Delta}{R_1} + 1\right)\\
        \leq &2(K+1) \exp\left(\frac{R_H - \eta\Delta}{K+1} + 1\right)\\
        = &2(K+1) \exp\left(\frac{2\log(n) + K/e - \eta\Delta}{K+1} + 1\right)\\
        \leq &8n^{\frac2{K+1}}(K+1)\exp\left(-\eta\frac{\Delta}{K+1}\right),
    \end{align*}
    where the third inequality is because \(R_{H} - \eta \Delta \leq 0\), and the last inequality holds because \(\exp(\frac{K/e}{K+1}+1) \leq \exp(1+1/e) \leq 4\).
\end{proof}

\section{Proof of Theorem \ref{thm: poly convergence OT}}\label{appendix: proof of OT poly convergence}

For legibility, in what follows, we introduce the symbols \(\hat{x}, \hat{y}, \hat{a}\) used exclusively for dummy variables.

\begin{proof}
    For the reader's convenience, we list the explicit form of the Lyapunov function \(f\) here:
    \begin{equation*}
            f(\hat{x},\hat{y},\hat{a})
            =-\frac{1}{\eta}\sum_{ij}\exp{\left(\eta(-C_{ij} + \sum_{m = 1}^{L + K}\hat{a}_{m}\left(D_{m}\right)_{ij}+ \hat{x}_i + \hat{y}_{j}) - 1 \right)}
            + \sum_{i}\hat{x}_{i}r_{i} + \sum_{j}\hat{y}_{j}c_{j} - \frac{1}{\eta}\sum_{k = 1}^{K} {\exp(-\eta \hat{a}_{k} - 1)}.
    \end{equation*}
    First we prove \(\sum_{ij}P_{ij} = 1\). By assumption, the dual variable \((x, y, a)\) has gone through at least one update step. If the \(x\) update step has been last performed, then one has \(\nabla_x f = 0\), which implies \(\sum_{ij} P_{ij} = \sum_{i}r_i = 1\). If the \(y\) update step has been last performed, then one likewise has \(\nabla_y f = 0\) and \(\sum_{ij} P_{ij} = \sum_{j}c_j = 1\). If the \(a\) update step has been performed, then the optimality in the \(t\) variable as shown in \eqref{eqn: t optimality} implies \(\sum_{ij}P_{ij} = 1\). Thus, one has \(\sum_{ij}P_{ij} = 1\) in all the three possible cases, as claimed.
    
    The proof for \eqref{eqn: x step result} and \eqref{eqn: y step result} then largely follows from Lemma 2 of \citep{altschuler2017near}.
    Suppose that an \(x\) update step is performed and \(P'\) is the matrix formed by the dual variable \((x', y, a)\). Due to the optimality of the \(x'\) variable, one has \(\sum_{ij}P'_{ij} = 1\). Thus one has
    \[
    f(x', y, a) - f(x, y, a) = \frac{1}{\eta}(\sum_{ij}P_{ij} - \sum_{ij}P'_{ij}) + \sum_{i}r_i(x'_i - x_i) = \sum_{i}r_i(x'_i - x_i) = \KL{r}{P\mathbf{1}},
    \]
    where the last equality is due to \(r_i(x'_i - x_i) = r_i \log \frac{r_i}{\left(P\mathbf{1}\right)_i}\). The proof for \eqref{eqn: y step result} follows likewise:
    \[
    f(x, y', a) - f(x, y, a) = \frac{1}{\eta}(\sum_{ij}P_{ij} - \sum_{ij}P'_{ij}) + \sum_{j}c_j(y'_j - y_j) = \sum_{j}c_j(y'_j - y_j) = \KL{c}{P^{\top}\mathbf{1}}.
    \]

    We proceed with the proof for \eqref{eqn: a augmented step result}. First, we introduce an augmented Lyapunov function by maximizing over the \(t\) variable, which provides a smoother optimization landscape for subsequent analysis. Define the augmented Lyapunov function \(f_{\mathrm{aug}}\) as
    \[f_{\mathrm{aug}}(\hat{x}, \hat{y}, \hat{a}) := \max_{\tilde{t}}f(\hat{x} + \tilde{t} \mathbf{1}, \hat{y}, \hat{a}).\]
    As \(\sum_{ij}P_{ij} = 1\) implies optimality over the \(t\) variable by \eqref{eqn: t optimality}, one has \(f(x, y, a) = f_{\mathrm{aug}}(x, y, a)\) for the dual variable \((x, y, a)\). After the augmented \(a\) update step, one has by definition that \(f(x + t'\mathbf{1}, y, a') = f_{\mathrm{aug}}(x, y, a')\). Thus, one can simplify the left-hand side of \eqref{eqn: a augmented step result} by the following equation:
    \[
    f(x + t'\mathbf{1}, y, a') - f(x, y, a)=
    f_{\mathrm{aug}}(x, y, a') - f_{\mathrm{aug}}(x, y, a) = \max_{\tilde a}f_{\mathrm{aug}}(x, y, \tilde a) - f_{\mathrm{aug}}(x, y, a).
    \]

    We then derive the formula for \(f_{\mathrm{aug}}\). By direct calculation, one has 
    \begin{align*}
        &f(\hat{x} + t\mathbf{1} ,\hat{y},\hat{a})\\
        =
        &-\frac{1}{\eta}\sum_{ij}\exp{\left(\eta(-C_{ij} + \sum_{m = 1}^{L + K}\hat{a}_{m}\left(D_{m}\right)_{ij}+ \hat{x}_i + \hat{y}_{j} + t) - 1 \right)}
        + \sum_{i}\hat{x}_{i}r_{i} + \sum_{j}\hat{y}_{j}c_{j} + t - \frac{1}{\eta}\sum_{k = 1}^{K} {\exp(-\eta \hat{a}_{k} - 1)}.
    \end{align*}
    Let \(t^{\star} = \argmax_{\tilde{t}}f(\hat{x} + \tilde{t}\mathbf{1} ,\hat{y},\hat{a})\).
    Under the optimality condition \(\partial_{t} f(\hat{x} + t^{\star}\mathbf{1} ,\hat{y},\hat{a}) = 0\), one has \[
    \sum_{ij}\exp{\left(\eta(-C_{ij} + \sum_{m = 1}^{L + K}\hat{a}_{m}\left(D_{m}\right)_{ij}+ \hat{x}_i + \hat{y}_{j} + t^{\star}) - 1 \right)} = 1.
    \]
    Taking log over the above equation, one has the following result for \(t^{\star}\):
    \begin{equation}\label{eqn: t optimality in LSE}
        1 - \eta t^{\star} = \LSE{\eta(-C + \sum_{m}\hat{a}_mD_m + \hat{x} \mathbf{1}^{\top} + \mathbf{1} \hat{y}^{\top})},
    \end{equation}
    where for a matrix \(M\), the function \(\mathrm{LSE}\) defines the log-sum-exponential function \(\LSE{M} = \log\left(\sum_{ij}\exp{\left(m_{ij}\right)}\right)\). Thus one has 
    \begin{align*}
    f_{\mathrm{aug}}(\hat{x},\hat{y},\hat{a}) =& f(\hat{x} + t^{\star}\mathbf{1} ,\hat{y},\hat{a})
        \\
        =&-\frac{1}{\eta} +
        \sum_{i}\hat{x}_{i}r_{i} + \sum_{j}\hat{y}_{j}c_{j} + t^{\star} - \frac{1}{\eta}\sum_{k = 1}^{K} {\exp(-\eta \hat{a}_{k} - 1)}
        \\
        =
        &-\frac{1}{\eta}\LSE{\eta(-C + \sum_{m}\hat{a}_mD_m + \hat{x} \mathbf{1}^{\top} + \mathbf{1} \hat{y}^{\top})}
        + \sum_{i}\hat{x}_{i}r_{i} + \sum_{j}\hat{y}_{j}c_{j} - \frac{1}{\eta}\sum_{k = 1}^{K} {\exp(-\eta \hat{a}_{k} - 1)}.
    \end{align*}
The rest of the proof follows from a perturbational argument on \(f_{\mathrm{aug}}\) around the point \((x,y,a)\), and thus we provide a formula for the derivatives of \(f_{\mathrm{aug}}\). Let \(\hat{P}=\exp{\left(\eta(-C + \sum_{m}\hat{a}_{m}D_{m} + \hat{x}\1^{\top} + \1\hat{y}^{\top}) - 1\right)}\) be the transport plan formed by the dual variable \((\hat{x}, \hat{y}, \hat{a})\). For \(k = 1, \ldots, K\), one has 
\[
\partial_{a_k}f_{\mathrm{aug}}(\hat{x}, \hat{y}, \hat{a})
= 
{\exp(-\eta \hat{a}_{k} - 1)} -D_k \cdot \frac{\hat{P}}{\sum_{ij}\hat{P}_{ij}},
\]
and likewise for \(l = 1, \ldots, L\) one has 
\[
\partial_{a_{l+K}}f_{\mathrm{aug}}(\hat{x}, \hat{y}, \hat{a})
= 
-D_{l+K} \cdot \frac{\hat{P}}{\sum_{ij}\hat{P}_{ij}}.
\]

In terms of the second-order information, by direct calculation, one has
\begin{equation}\label{eqn: f_aug second order}
    \partial_{a_{m}} \partial_{a_{m'}} f_{\mathrm{aug}}(\hat{x}, \hat{y}, \hat{a}) = 
    - \eta(D_m \odot D_{m'})\cdot \frac{\hat{P}}{\sum_{ij}\hat{P}_{ij}}
    +
    \eta \frac{\left(\hat{P} \cdot D_m\right)\left(\hat{P} \cdot D_{m'}\right)}{\left(\sum_{ij}\hat{P}_{ij}\right)^2}
    + 
    s_{mm'},
\end{equation}
where \(\odot\) is the Hadamard element-wise product, and \(s_{mm'}\) is zero except when \(m = m' = k\) for \(k = 1, \ldots, K\), in which case one has \(s_{mm'} = -\eta\exp(-\eta\hat{a}_{k}-1)\). 

We then bound the spectrum of the Hessian matrix. Define
\begin{equation}\label{eqn: def of rmm}
    r_{mm'} = - \eta(D_m \odot D_{m'})\cdot \frac{\hat{P}}{\sum_{ij}\hat{P}_{ij}}
    +
    \eta \frac{\left(\hat{P} \cdot D_m\right)\left(\hat{P} \cdot D_{m'}\right)}{\left(\sum_{ij}\hat{P}_{ij}\right)^2},
\end{equation}
and \(R = [r_{mm'}]_{m, m' = 1}^{K+L}\). Let \((I,J)\) be a random vector supported on \([n] \times [n]\), and moreover let \((I, J)\) follow the multinomial distribution with \(\mathbb{P}\left[(I, J) = (i,j)\right] = \frac{\hat{P}_{ij}}{\sum_{ij}\hat{P}_{ij}}\). Moreover, for \(m = 1, \ldots, K+L\), let \(Y_m\) be the random variable defined by \(Y_m = \left(D_m\right)_{IJ}\). Then, one can directly verify that \(R = -\eta \mathrm{Cov}\left(Y_1, \ldots, Y_{K+L}\right)\), and therefore \(R\) is a negative semi-definite matrix. Let \(S =  [s_{mm'}]_{m, m' = 1}^{K+L}\). The matrix \(S\) is a diagonal matrix with non-positive entries and therefore is likewise negative semi-definite. By the trace bound one has
\begin{align*}
    \lVert R\rVert_{2}
    \leq &-\sum_{m = 1}^{K+L} r_{mm}\\ = &\sum_{m=1}^{K+L} \eta\left((D_m \odot D_{m})\cdot \frac{\hat{P}}{\sum_{ij}\hat{P}_{ij}}
    -
    \left(\frac{\hat{P} \cdot D_m}{\sum_{ij}\hat{P}_{ij}}\right)^2\right)\\
    \leq &\sum_{m=1}^{K+L} \eta\left((D_m \odot D_{m})\cdot \frac{\hat{P}}{\sum_{ij}\hat{P}_{ij}}\right)\\
    \leq &\eta(K+L)c_d^2,
\end{align*}
and therefore one has the spectral bound \(S - \eta(K+L)c_d^2 I \preceq \nabla_{a}^2f_{\mathrm{aug}} \preceq S\).

For simplicity, for the current variable \((x,y,a)\), define a function \(g\) by \(g(\hat{a}) = f_{\mathrm{aug}}(x, y, \hat{a})\).
By Taylor's remainder theorem, for any \(\delta a\), one has
\[
f_{\mathrm{aug}}(x, y, a + \delta a) - f_{\mathrm{aug}}(x, y, a) = g(a + \delta a) - g(a) = \delta a^{\top}\nabla_a g(a) + \frac{1}{2}\delta a^{\top} \nabla^{2}_{a}g(a + \xi \delta a)\delta a,
\]
where \(\xi \in [0, 1]\) is an unknown quantity. Thus, the spectral bound on \(\nabla_{a}^2f_{\mathrm{aug}}\) leads to the following inequality,
\begin{equation}\label{eqn: lower bound a step}
\begin{aligned}
    &f_{\mathrm{aug}}(x, y, a + \delta a) - f_{\mathrm{aug}}(x, y, a) \\
    \geq 
    &
    \delta a^{\top}\nabla_a g(a) - \frac{1}{2}\eta(K+L)c_d^2 \lVert \delta a \rVert_{2}^2 - \frac{1}{2}\sum_{k = 1}^{K}\eta\exp(-\eta a_k-1)\exp(-\eta\xi \delta a_k)\delta a_k^2
    \\
    =
    &\sum_{m=1}^{K+L}\left(\delta a_m d_m - \frac{1}{2}\eta(K+L)c_d^2 \delta a_m^2\right) -\frac{1}{2}\sum_{k = 1}^{K}\eta\exp(-\eta a_k-1)\exp(-\eta\xi \delta a_k)\delta a_k^2
\end{aligned}
\end{equation}
where \(d_m = \partial_{a_m}g(a)\) is due to \(\sum_{ij}P_{ij} = 1\).

Importantly, the lower bound in \eqref{eqn: lower bound a step} is fully separable in terms of the \(\delta a_m\) terms. In what follows, we use \eqref{eqn: lower bound a step} to give a construction for \(\delta a\), which would then give a lower bound for the improvement in the Lyapunov function in the \(a\) update step. 
For \(l = 1, \ldots, L\), one sets \(\delta a_{l+K} =  \frac{d_{l+K}}{
\eta (K+L)c_d^2}\). One then has 
\[
\delta a_{l+K} d_{l+K} - \frac{1}{2}\eta(K+L)c_d^2 \delta a_{l+K}^2 = \frac{d_{l+K}^2}{2\eta (K+L)c_d^2}.
\]
For \(k = 1, \ldots K\), one sets
\begin{equation}\label{eqn: choice of delta a_k}
    \delta a_k = \max\left(-
\frac{\log(2)}{\eta}, \frac{d_k}{
\eta\left(
2\exp(-\eta a_k-1) + (K+L)c_d^2
\right)}
\right).
\end{equation}
We shall prove that the construction of \(\delta a_k\) leads to the following bound:
\begin{align*}
    \delta a_k d_k - \frac{1}{2}\eta(K+L)c_d^2 \delta a_k^2 -\frac{1}{2}\eta\exp(-\eta a_k-1)\exp(-\eta\xi \delta a_k)\delta a_k^2
    \geq
    \lvert  d_k \rvert \min\left(
    \frac{1}{8\eta}
    ,
    \frac{\lvert  d_k \rvert}{8\eta c_d + 4\eta(K+L)c_d^2}
    \right).
\end{align*}

By construction, one has \(\exp(-\eta\xi \delta a_k) \leq 2\), and thus \[-\frac{1}{2}\eta\exp(-\eta a_k-1)\exp(-\eta\xi \delta a_k)\delta a_k^2 \geq -\frac{1}{2}\left(2\eta\exp(-\eta a_k-1)\delta\right) a_k^2.\]

If \(-
\frac{\log(2)}{\eta} \geq \frac{d_k}{
\eta\left(
2\exp(-\eta a_k-1) + (K+L)c_d^2
\right)}\), then one has \(\delta a_k = -\frac{\log(2)}{\eta} \leq 0\) and \(d_k \leq 0\). By \eqref{eqn: choice of delta a_k} one has
\[
\delta a_k\eta\left(
2\exp(-\eta a_k-1) + (K+L)c_d^2
\right) \geq d_k,
\]
and thus multiplying both sides by \(-\frac{1}{2}\delta a_k\), the inequality becomes
\[
- \frac{1}{2}\eta\left(  2\exp(-\eta a_k-1) + (K+L)c_d^2\right)\delta a_k^2 \geq  -\frac{1}{2} \delta a_k d_k.
\]
Thus
\begin{align*}
    &\delta a_k d_k - \frac{1}{2}\eta(K+L)c_d^2 \delta a_k^2 -\frac{1}{2}\eta\exp(-\eta a_k-1)\exp(-\eta\xi \delta a_k)\delta a_k^2 \\
    \geq
    &\delta a_k d_k - \frac{1}{2}\eta\left(  2\exp(-\eta a_k-1) + (K+L)c_d^2\right)\delta a_k^2
    \\
    \geq
    &
    \delta a_k d_k - \frac{1}{2} \delta a_k d_k 
    \\
    = &\frac{1}{2} \delta a_k d_k \\
    = & \lvert d_k \rvert \frac{\log(2)}{2\eta},
\end{align*}
which in particular implies the claimed bound as \(\frac{\log(2)}{2} \geq \frac{1}{8}\).

Otherwise, if \(-
\frac{\log(2)}{\eta} \leq \frac{d_k}{
\eta\left(
2\exp(-\eta a_k-1) + (K+L)c_d^2
\right)}\), then one has \(\delta a_k = \frac{d_k}{
\eta\left(
2\exp(-\eta a_k-1) + (K+L)c_d^2
\right)}\) by construction. Then, one has 
\begin{align*}
    &\delta a_k d_k - \frac{1}{2}\eta(K+L)c_d^2 \delta a_k^2 -\frac{1}{2}\eta\exp(-\eta a_k-1)\exp(-\eta\xi \delta a_k)\delta a_k^2 \\
    \geq
    &\delta a_k d_k - \frac{1}{2}\eta\left(  2\exp(-\eta a_k-1) + (K+L)c_d^2\right)\delta a_k^2
    \\
    =
    &\frac{d_k^2}{2\eta\left(  2\exp(-\eta a_k-1) + (K+L)c_d^2\right)}
    \\
    =&
    \frac{d_k^2}{ 4\eta\exp(-\eta a_k-1) + 2\eta(K+L)c_d^2}
    \\
    \geq&
    \lvert  d_k \rvert\frac{\lvert  d_k \rvert}{ 4\eta\lvert  d_k \rvert + 4\eta c_d + 2\eta(K+L)c_d^2}
    \\
    \geq 
    &
    \lvert  d_k \rvert \min\left(
    \frac{1}{8\eta}
    ,
    \frac{\lvert  d_k \rvert}{8\eta c_d + 4\eta(K+L)c_d^2}
    \right),
\end{align*}
where the third inequality is obtained by applying the mediant inequality, and the second inequality is because
\[
\exp(-\eta a_k-1) \leq \lvert \exp(-\eta a_k-1) - P \cdot D_{k} \rvert + \lvert P \cdot D_{k} \rvert = \lvert  d_k \rvert + \lvert P \cdot D_{k} \rvert \leq \lvert  d_k \rvert + c_d.
\]

The proof for \eqref{eqn: a augmented step result} is by organizing the arranged results:
\begin{align*}
    &f(x + t'\mathbf{1}, y, a') - f(x, y, a)
    \\
    =
    &f_{\mathrm{aug}}(x, y, a') - f_{\mathrm{aug}}(x, y, a) 
    \\
    = &\max_{\tilde{a}}f_{\mathrm{aug}}(x, y, \tilde{a}) - f_{\mathrm{aug}}(x, y, a)
    \\
    \geq 
    &f_{\mathrm{aug}}(x, y, a + \delta a) - f_{\mathrm{aug}}(x, y, a)
    \\
    =& g(a + \delta a) - g(a)
    \\
    = &\delta a^{\top}\nabla_a g(a) + \frac{1}{2}\delta a^{\top} \nabla^{2}_{a}g(a + \xi \delta a)\delta a
    \\
    \geq &\sum_{k=1}^{K}\left(\delta a_k d_k - \frac{1}{2}\eta(K+L)c_d^2 \delta a_k^2 - \frac{1}{2}\eta \exp(-\eta a_k-1)\exp(-\eta\xi \delta a_k)\delta a_k^2\right) \\ 
    &\qquad
    +\sum_{l=1}^{L}\left(\delta a_{l+K} d_{l+K} - \frac{1}{2}\eta(K+L)c_d^2 \delta a_{l+K}^2 \right)\\
    \geq 
    &
    \sum_{k=1}^{K}\lvert  d_k \rvert \min\left(
    \frac{1}{8\eta}
    ,
    \frac{\lvert  d_k \rvert}{8\eta c_d + 4\eta(K+L)c_d^2}
    \right) + \sum_{l=1}^{L}\frac{d_{l+K}^2}{2\eta (K+L)c_d^2}.
\end{align*}

\end{proof}

\begin{remark}
    In the proof for Theorem \ref{thm: poly convergence OT}, it might be advantageous to use an alternative spectral bound when the matrices \(D_m\) have special sparsity structures. In such special cases, it is better to use the Gershgorin circle theorem instead of the trace bound in the numerical treatment for the \(a\) step. 

    One defines
    \begin{equation}\label{eqn: def of rmm1 and rmm2}
        \begin{aligned}
            &r_{1; mm'} = - \eta(D_m \odot D_{m'})\cdot \frac{\hat{P}}{\sum_{ij}\hat{P}_{ij}},\\
            &r_{2; mm'} = 
            \eta \frac{\left(\hat{P} \cdot D_m\right)\left(\hat{P} \cdot D_{m'}\right)}{\left(\sum_{ij}\hat{P}_{ij}\right)^2},
        \end{aligned}
    \end{equation}
    and let \(R_1 = [r_{1; mm'}]_{m,m' = 1}^{K+L}, R_2 = [r_{2; mm'}]_{m,m' = 1}^{K+L}\). Let \(Y_m\) for \(m = 1, \ldots, K+L\) be as in the proof of Theorem \ref{thm: poly convergence OT}. One can directly verify that \(R_{2}\) is positive semi-definite as it is the outer product of the vector \(\sqrt{\eta}\mathbb{E}\left[Y_1, \ldots, Y_{K+L}\right]\) with itself, and thus \(S + R_1 \preceq \nabla_{a}^2f_{\mathrm{aug}}\). By the Gershgorin circle theorem, it follows that \(\lVert R_1 \rVert_{2}\) is bounded by the matrix 1-norm of \(R_1\). In particular, define \(E \in \R^{n \times n}\) as the sum of the constraint matrices up to absolute value, i.e.
    \[
    \left(E\right)_{ij} = \sum_{m = 1}^{K+L}\lvert \left(D_{m}\right)_{ij} \rvert.
    \]
    Then, one has
    \begin{align*}
        \lVert R_1 \rVert_{2}
        \leq &\max_{m} \sum_{m' = 1}^{K+L}\lvert r_{1;mm'} \rvert \\
        = &\eta \max_{m} \sum_{m' = 1}^{K+L}\left| (D_m \odot D_{m'})\cdot \frac{\hat{P}}{\sum_{ij}\hat{P}_{ij}} \right|\\
        = & \eta \max_{m} \sum_{m' = 1}^{K+L}\left| \left(\frac{\hat{P}}{\sum_{ij}\hat{P}_{ij}}  \odot D_{m'} \right)\cdot D_m \right|\\
        \leq & \eta c_d \sum_{m' = 1}^{K+L}\lVert \frac{\hat{P}}{\sum_{ij}\hat{P}_{ij}}  \odot D_{m'}  \rVert_{1}\\
        = & \eta c_d \sum_{m' = 1}^{K+L}\left| \frac{\hat{P}}{\sum_{ij}\hat{P}_{ij}}  \cdot D_{m'}  \right|\\
        \leq & \eta c_d \left(\frac{\hat{P}}{\sum_{ij}\hat{P}_{ij}} \cdot E\right)\\
        \leq & \eta c_d \lVert E \rVert_{\infty},
    \end{align*}
    where the second and third equality is by the property of the Hadamard product, the second and fourth inequality is by Holder's inequality, and the third inequality is by the definition of \(E\). We define \(c_{e} = \lVert E \rVert_{\infty}\), which gives one the following bound in contrast to \eqref{eqn: lower bound a step}:
    \begin{equation}\label{eqn: lower bound a step alternative}
\begin{aligned}
    &f_{\mathrm{aug}}(x, y, a + \delta a) - f_{\mathrm{aug}}(x, y, a) \\
    \geq 
    &
    \delta a^{\top}\nabla_a g(a) - \frac{1}{2}\eta c_{e}c_d \lVert \delta a \rVert_{2}^2 - \frac{1}{2}\sum_{k = 1}^{K}\eta\exp(-\eta a_k-1)\exp(-\eta\xi \delta a_k)\delta a_k^2
    \\
    =
    &\sum_{m=1}^{K+L}\left(\delta a_m d_m - \frac{1}{2}\eta c_{e}c_d \delta a_m^2\right) -\frac{1}{2}\sum_{k = 1}^{K}\eta\exp(-\eta a_k-1)\exp(-\eta\xi \delta a_k)\delta a_k^2.
\end{aligned}
\end{equation}
Using \eqref{eqn: lower bound a step alternative}, one can derive a tighter bound in the conclusion part of Theorem \ref{thm: poly convergence OT} by applying a similar argument in the main statement. Overall, the bound in \eqref{eqn: lower bound a step alternative} is more advantageous when \(c_e\) is significantly smaller than \((K+L)c_d\), and indeed one can see that \(c_e \leq (K+L)c_d\). One situation in which an advantage exists is when the constraints \(D_m\) are themselves sparse, which is the case in capacity constrained OT.

\end{remark}

\section{Proof of Theorem \ref{thm: OT linear convergence} }
\label{appendix: proof of OT linear convergence}

\begin{proof}
    For any iteration index \(u\), let \(Q^u_x, Q^u_y, Q^u_a\) be the quantity as defined by the Theorem for \(P_u\) in place of \(P\). Similarly we define \(d_m^u\) by \eqref{eqn: def of constraint distance} with \(P_u\) in place of \(P\). One then has by construction that
    \begin{equation}\label{eqn: decrease per greedy step}
        f(x^{u+1}, y^{u+1}, a^{u+1}) - f(x^{u}, y^{u}, a^{u})\geq \frac{1}{3}\left(Q_x^u + Q_y^u + Q_a^u\right).
    \end{equation}

    In particular, for any \(k = 1, \ldots, K\), one has
    \[
    f(x^{u+1}, y^{u+1}, a^{u+1}) - f(x^{u}, y^{u}, a^{u}) 
    \geq
    \frac{1}{3}\lvert d^u_k \rvert \min\left(
    \frac{1}{8\eta}
    ,
    \frac{\lvert  d^u_k \rvert}{8\eta c_d + 4\eta(K+L)c_d^2}
    \right).
    \]
    Suppose that there are \(N_I\) iterations for which \(\frac{1}{8\eta} \leq \frac{\lvert  d^u_k \rvert}{8\eta c_d + 4\eta(K+L)c_d^2}\) for some \(k = 1, \ldots, K\). For such \(u\), one has \(\lvert  d^u_k \rvert = \Omega(1)\) and therefore one has \(f(x^{u+1}, y^{u+1}, a^{u+1}) - f(x^{u}, y^{u}, a^{u}) = \Omega(1)\). Thus \(N_I = O(c_g)\), and therefore for \(k = 1, \ldots, K\), one has \(\frac{1}{8\eta} \geq \frac{\lvert  d^u_k \rvert}{8\eta c_d + 4\eta(K+L)c_d^2}\) except for \(O(c_g)\) iterations. Let \(C = \frac{1}{8\eta c_d + 4\eta(K+L)c_d^2}\), and then the following condition holds except for \(N_{I}\) iterations:
    \begin{equation}\label{eqn: generic condition 1 for greedy update}
        f(x^{u+1}, y^{u+1}, a^{u+1}) - f(x^{u}, y^{u}, a^{u}) 
    \geq
    \frac{1}{3}\left(
    \KL{P_u\mathbf{1}}{r} + \KL{P_u^{\top}\mathbf{1}}{c} + C\sum_{m=1}^{K+L}(d_m^u)^2
    \right).
    \end{equation}
    Let \(N_{II}\) be the number of iterations for which the following condition is satisfied 
    \begin{equation}\label{eqn: generic condition 2 for greedy update}
        \lVert \nabla f\rVert_{1} = \lVert P_u\mathbf{1} - r \rVert_{1} + \lVert P_u^{\top}\mathbf{1} - c \rVert_{1} + \sum_{m = 1}^{K+L}\lvert d_m \rvert > \epsilon.
    \end{equation}
    Then, if \(u\) is among the said \(N_{II}\) iterations, one has
    \begin{align*}
        \epsilon^2 < &\left(\lVert P_u\mathbf{1} - r \rVert_{1} + \lVert P_u^{\top}\mathbf{1} - c \rVert_{1} +  \sum_{m = 1}^{K+L}\lvert d^u_m \rvert\right)^2\\
        \leq &(K+L+2)^2\left(\lVert P_u\mathbf{1} - r \rVert_{1}^2 + \lVert P_u^{\top}\mathbf{1} - c \rVert_{1}^2 + \sum_{m=1}^{K+L}(d_m^u)^2\right)\\
        \leq &(K+L+2)^2\left(2 \KL{P_u\mathbf{1}}{r} + 2\KL{P_u^{\top}\mathbf{1}}{c} + \sum_{m=1}^{K+L}(d_m^u)^2\right),
    \end{align*}
    where the second inequality is by Cauchy-Schwartz, the third inequality is \eqref{eqn: def of constraint distance}, and the third inequality is by Pinsker's inequality. Thus, for at least \(N_{II} - N_{I}\) iterations, both \eqref{eqn: generic condition 1 for greedy update} and \eqref{eqn: generic condition 2 for greedy update} are satisfied, under which there exists a constant \(C'\) for which one has
    \begin{align*}
        \epsilon^2 
        \leq &(K+L+2)^2\left(2 \KL{P_u\mathbf{1}}{r} + 2\KL{P_u^{\top}\mathbf{1}}{c} + \sum_{m=1}^{K+L}(d_m^u)^2\right)\\
        \leq & C'\left(f(x^{u+1}, y^{u+1}, a^{u+1}) - f(x^{u}, y^{u}, a^{u}) \right)
    \end{align*}
    As \(\sum_{u = 0}^{\infty}f(x^{u+1}, y^{u+1}, a^{u+1}) - f(x^{u}, y^{u}, a^{u}) \leq c_g\), one must have \(N_{II} - N_{I} = O\left((\epsilon)^{-2}c_g\right)\). One then has \(N_{II} = N_{I} + O\left((\epsilon)^{-2}c_g\right) = O\left((\epsilon)^{-2}c_g\right)\), as desired, and thus the first \(u\) for which \eqref{eqn: greedy main result} holds occurs after \(O\left((\epsilon)^{-2}c_g\right)\) iterations. 
    
    Lastly, \eqref{eqn: greedy main result 2} holds as a result of \eqref{eqn: greedy main result} because one has \(\lvert d^{u}_{K+l} \rvert = P^u \cdot D_{K+l}\) and \(\lvert d^{u}_{k} \rvert \geq \lvert \min(0, P^u \cdot D_{k})\rvert \).
    
\end{proof}

As a result of Theorem \ref{thm: OT linear convergence}, one has the following bound if one applies a rounding operation to the output of \(P\) in Theorem \ref{thm: OT linear convergence}:
\begin{proposition}\label{prop: constraint violation bound}
    Let \(P\) be the same as in Theorem \ref{thm: OT linear convergence}, let \(\mathcal{U}_{r, c}\) denote the set of transport matrix from \(r\) to \(c\), and let \(\mathrm{Round}(P, \mathcal{U}_{r, c})\) be the result of applying the rounding algorithm in \citet{altschuler2017near} to obtain projection of \(P\) into \(\mathcal{U}_{r, c}\). Define constraint violation of \(P\) by \[\begin{aligned}
        \mathrm{Violation}(P) = 
    &\sum_{k=1}^{K}\lvert \min\left(\mathrm{Round}(P, \mathcal{U}_{r, c})\cdot D_k, 0\right)  \rvert\\ + &\sum_{l=1}^{L}\lvert \mathrm{Round}(P, \mathcal{U}_{r, c})\cdot D_{l+K}  \rvert.
    \end{aligned}\]
    One has 
    \[
    \mathrm{Violation}(P) \leq \epsilon\left(1 + 2(K+L)c_d\right)
    \]
    where \(c_{d} := \max_{m \in [K+L]} \lVert D_m \rVert_{\infty}\).
\end{proposition}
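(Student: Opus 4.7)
The plan is to decompose the constraint violation of the rounded plan into the intrinsic violation already carried by $P$ plus the perturbation introduced by the rounding step, then apply \eqref{eqn: greedy main result 2} to bound both pieces. First I would use the fact, established in \citet{altschuler2017near}, that the rounding operator $\mathrm{Round}(\cdot, \mathcal{U}_{r,c})$ satisfies the marginal-error bound
\begin{equation*}
\lVert \mathrm{Round}(P,\mathcal{U}_{r,c}) - P \rVert_{1} \;\leq\; 2\bigl(\lVert P\mathbf{1} - r \rVert_{1} + \lVert P^{\top}\mathbf{1} - c \rVert_{1}\bigr).
\end{equation*}
Combining this with Holder's inequality and the assumption $\lVert D_m \rVert_{\infty} \leq c_d$ yields, for every constraint index $m$,
\begin{equation*}
\bigl\lvert (\mathrm{Round}(P,\mathcal{U}_{r,c}) - P)\cdot D_m \bigr\rvert \;\leq\; 2 c_d\bigl(\lVert P\mathbf{1} - r \rVert_{1} + \lVert P^{\top}\mathbf{1} - c \rVert_{1}\bigr).
\end{equation*}

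Next I would handle the equality and inequality contributions separately. For equality constraints, the triangle inequality directly gives $\lvert \mathrm{Round}(P,\mathcal{U}_{r,c})\cdot D_{l+K} \rvert \leq \lvert P\cdot D_{l+K} \rvert + \lvert (\mathrm{Round}(P,\mathcal{U}_{r,c}) - P)\cdot D_{l+K} \rvert$. For inequality constraints I would use the $1$-Lipschitz property of $t \mapsto \min(t,0)$, i.e. $\lvert \min(a,0) - \min(b,0) \rvert \leq \lvert a - b \rvert$, to obtain $\lvert \min(\mathrm{Round}(P,\mathcal{U}_{r,c})\cdot D_k, 0) \rvert \leq \lvert \min(P\cdot D_k, 0) \rvert + \lvert (\mathrm{Round}(P,\mathcal{U}_{r,c}) - P)\cdot D_k \rvert$.

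Summing these $K+L$ inequalities gives
\begin{equation*}
\mathrm{Violation}(P) \;\leq\; \sum_{k=1}^{K}\lvert \min(P\cdot D_k,0)\rvert + \sum_{l=1}^{L}\lvert P\cdot D_{l+K}\rvert + 2(K+L)c_d\bigl(\lVert P\mathbf{1} - r \rVert_{1} + \lVert P^{\top}\mathbf{1} - c \rVert_{1}\bigr).
\end{equation*}
The first two terms are bounded by $\epsilon$ directly from \eqref{eqn: greedy main result 2}, and the parenthesized factor is also bounded by $\epsilon$ from the same inequality, producing the claimed $\epsilon(1 + 2(K+L)c_d)$ bound. There is no real obstacle here; the only mildly delicate point is invoking the correct $\ell_1$ guarantee for the Altschuler--Weed--Rigollet rounding procedure and noting that the $1$-Lipschitz property of $\min(\cdot,0)$ lets one treat the inequality constraints in the same manner as the equality ones.
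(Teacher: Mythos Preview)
Your proposal is correct and follows essentially the same approach as the paper: invoke the Altschuler--Weed--Rigollet rounding guarantee to bound $\lVert \mathrm{Round}(P,\mathcal{U}_{r,c}) - P \rVert_{1}$, use H\"older to control each $\lvert (\mathrm{Round}(P,\mathcal{U}_{r,c}) - P)\cdot D_m\rvert$ by $c_d$ times that $\ell_1$ distance, and combine with \eqref{eqn: greedy main result 2}. The paper's proof is terser---it writes the rounding bound directly as $2\epsilon$ and leaves the $1$-Lipschitz step for $\min(\cdot,0)$ implicit---but the logic is identical.
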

\begin{proof}
    This proof is a simple consequence of existing results. By Lemma 7 in \citet{altschuler2017near}, one has
    \[
    \lVert P - \mathrm{Round}(P, \mathcal{U}_{r, c}) \rVert_{1} \leq 2 \epsilon.
    \]
    Thus one has
    \[\begin{aligned}
        \mathrm{Violation}(P) \leq 
    &\sum_{k=1}^{K}\lvert \min\left(P\cdot D_k, 0\right)  \rvert+ \sum_{l=1}^{L}\lvert P\cdot D_{l+K}  \rvert + \sum_{m = 1}^{K+L} \lvert \left(P -\mathrm{Round}(P, \mathcal{U}_{r, c})\right) \cdot D_{m}  \rvert\\
    \leq &\epsilon + 2(K+L)c_d\epsilon,
    \end{aligned}\]
    where the last inequality uses Holder's inequality and \eqref{eqn: greedy main result 2}.
\end{proof}

\section{Equivalence of primal and primal-dual form}\label{sec: primal versus primal dual}
We now show that the primal form in \eqref{eqn: entropic constrained OT general form} can be obtained from the primal-dual form by eliminating the dual variables. 
\begin{proposition}
Define
\begin{align*}
L(P, s, x,y,a) = &\frac{1}{\eta} P \cdot \log P + C\cdot P - x\cdot(P\1-r) - y\cdot(P^{\top}\1-c)\\ + &\frac{1}{\eta}\sum_{k = 1}^{K}s_{k} \log{s_{k}} + \sum_{k=1}^{K}a_{k}s_{k} -\sum_{m = 1}^{K+L}a_{m}(D_{m} \cdot P),
\end{align*}
and then, for \(\mathcal{S}\) as in \eqref{eqn: constrained OT general form}, the following equation holds:
\begin{equation}
\max_{x,y,a}\min_{P,s}  L(P, s, x,y,a,b) = \min_{P: P\1=r, P^{\top}\1=c, P \in \mathcal{S}} \frac{1}{\eta} P\cdot \log P + \sum_{k =1}^{K}\frac{1}{\eta}(D_{k} \cdot P) \cdot \log (D_{k} \cdot P) + C\cdot P
\end{equation}
Moreover, for the Lyapunov potential function \(f\) in \eqref{eqn: dual form}, one has
\begin{equation}
f(x,y,a) =  \min_{P,s} L(P, s, x,y,a).
\end{equation}
\end{proposition}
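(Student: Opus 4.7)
The plan is to decompose the proposition into two independent claims and tackle each via a standard Lagrangian-duality argument. First, for any fixed $(x,y,a)$, I would establish $f(x,y,a) = \min_{P,s} L(P,s,x,y,a)$ by explicit first-order analysis: $L$ is strictly convex in $(P,s)$ on the nonnegative orthant (with the $0\log 0 = 0$ convention), so a unique minimizer exists and is characterized by stationarity. Second, I would use strong duality to swap the outer $\max$ with the inner $\min$, and then recognize the resulting inner maximum over the dual variables as imposing the primal constraints as indicator functions.

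For the inner minimization, setting $\partial_{p_{ij}} L = 0$ and $\partial_{s_k} L = 0$ yields
\[
p_{ij}^\star = \exp\!\Bigl(\eta\bigl(-C_{ij} + x_i + y_j + \sum_{m} a_m (D_m)_{ij}\bigr) - 1\Bigr), \qquad s_k^\star = \exp(-\eta a_k - 1),
\]
both strictly positive and hence interior. Plugging these back in, each entropy term cancels against its coupled linear term via the identity $\frac{1}{\eta}p^\star\log p^\star + p^\star\bigl(C_{ij} - x_i - y_j - \sum_m a_m (D_m)_{ij}\bigr) = -p^\star/\eta$, and similarly $\frac{1}{\eta}s_k^\star\log s_k^\star + a_k s_k^\star = -s_k^\star/\eta$. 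Collecting the surviving terms $x\cdot r + y\cdot c - \frac{1}{\eta}\sum_{ij} p^\star_{ij} - \frac{1}{\eta}\sum_k s_k^\star$ recovers exactly \eqref{eqn: dual form}.

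For the minimax identity, I would invoke strong duality for convex programs with linear equality constraints: under the standing assumption that the primal \eqref{eqn: entropic constrained OT general form} is feasible, the linearity of all constraints together with strict convexity of the objective supplies the needed Slater-type condition, so $\max_{x,y,a}\min_{P,s} L = \min_{P,s}\max_{x,y,a} L$. The coefficients of $x_i$, $y_j$, $a_k$ (for $k \leq K$), and $a_{K+l}$ in $L$ are respectively $r_i - (P\mathbf{1})_i$, $c_j - (P^\top\mathbf{1})_j$, $s_k - D_k\cdot P$, and $-D_{K+l}\cdot P$. Since each of these dual variables is unconstrained in $\mathbb{R}$, the inner $\max$ is finite only when every coefficient vanishes, enforcing $P\mathbf{1}=r$, $P^\top\mathbf{1}=c$, $s_k = D_k\cdot P$ for $k\leq K$, and $D_{K+l}\cdot P = 0$ for $l\leq L$. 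Combined with the implicit domain $P\geq 0$, $s_k\geq 0$ from the entropy terms, these conditions are equivalent to $P\in \mathcal{S}$ together with $s_k = D_k\cdot P$. Substituting $s_k = D_k\cdot P$ into the residual $\frac{1}{\eta}P\cdot \log P + C\cdot P + \frac{1}{\eta}\sum_k s_k\log s_k$ yields exactly the right-hand side of the first identity.

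The main technical obstacle is cleanly justifying the minimax exchange while respecting the implicit nonnegativity baked into the entropy domain. I would handle this by (i) invoking a standard strong-duality result for convex optimization with linear constraints (e.g., Rockafellar-type theorems that require only feasibility, not Slater, when all constraints are affine), and (ii) noting that the slack-variable reformulation converts the original inequality $D_k\cdot P \geq 0$ into an equality $D_k\cdot P = s_k$ with $s_k\geq 0$, so the unconstrained Lagrange multiplier $a_k\in\mathbb{R}$ is justified. Positivity of the optimizers from Step~1 is automatic from the exponential form, so no boundary correction is required.
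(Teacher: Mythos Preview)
Your proposal is correct and follows essentially the same approach as the paper: both solve the inner minimization over $(P,s)$ via first-order conditions to obtain the explicit exponential formulas and recover $f$, and both invoke a minimax/strong-duality exchange so that the unconstrained maximization over $(x,y,a)$ enforces the primal constraints, after which substituting $s_k = D_k\cdot P$ yields the stated objective. Your treatment is somewhat more careful in justifying the minimax swap (feasibility plus affine constraints) than the paper, which simply appeals to the minimax theorem, but the logical structure is identical.
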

\begin{proof}
As \(L\) is concave in \(x, y\) and convex in \(P\), one can invoke the minimax theorem to interchange the operations of maximization and minimization. Therefore:
\begin{align*}
    \min_{P,s} \max_{x,y,a} L(P, s, x,y,a) = &\min_{P,s: P\1=r, P^{\top}\1=c, P \in \mathcal{S}, s_{k} =D_{k} \cdot P\, \forall k \in [K]}\frac{1}{\eta}P\cdot \log P + \frac{1}{\eta} s \cdot \log s + C\cdot P\\
    =&\min_{P: P\1=r, P^{\top}\1=c, P \in \mathcal{S}} \frac{1}{\eta} P\cdot \log P + \sum_{k =1}^{K}\frac{1}{\eta}(D_{k} \cdot P) \cdot \log (D_{k} \cdot P) + C\cdot P.
\end{align*}

In terms of entries, one writes \(L(P, s, x,y,a)\) as follows:
\begin{equation}\label{eqn: primal-dual form}
\begin{aligned}
    \max_{x_i,y_j} \min_{p_{ij}} L(P, s, x, y, a) =  &\frac{1}{\eta} \sum_{ij}p_{ij}\log p_{ij} + \sum_{ij} C_{ij} p_{ij} - \sum_i x_i(\sum_j p_{ij}- r_i) - \sum_j y_j (\sum_i p_{ij}- c_j)\\
    + &\frac{1}{\eta}\sum_{k = 1}^{K}s_{k} \log{s_{k}} + \sum_{k=1}^{K}a_{k}s_{k} -\sum_{m = 1}^{K+L}\sum_{ij}a_{m}(D_{m})_{ij} p_{ij}
\end{aligned}
\end{equation}

We then solve the inner min problem explicitly by taking the derivative of $p_{ij}, s_{k}$ to zero, from which one obtains
\[
p_{ij} = \exp(\eta(-C_{ij} + \sum_{m = 1}^{K+L}a_m(D_m)_{ij} + x_i + y_j ) -1).
\]
and
\[
s_{k} = \exp(-\eta a_k - 1).
\]
Plugging in the formula for \(p_{ij}\) and \(s_k\), one has
\begin{align*}
    &\min_{P,s}L(P, s, x,y, a) \\
    &= -\frac{1}{\eta}\sum_{ij}\exp(\eta(-C_{ij}+ \sum_{m = 1}^{K+L}a_m(D_m)_{ij} + x_i + y_j ) -1) + \sum_i r_ix_i + \sum_j c_j y_j - \frac{1}{\eta}\sum_{k = 1}^{K} {\exp(-\eta a_{k} - 1)},
\end{align*}
which is equal to \(f(x, y, a)\).
\end{proof}

\section{Relation of Algorithm \ref{alg:1} to Bregman projection}\label{appendix: bregman}

In this section, we show that the proposed \(a\) update step is equivalent to a Bregman projection for equality constraints. The Bregman projection step introduced in \citet{benamou2015iterative} introduces an iterative projection-based approach. As in the setting of Section \ref{sec: alg}, let \((x, y, a)\) be the current dual variable and let \(P\) be the intermediate matrix corresponding to \((x, y, a)\).

Suppose that \(K = 0\) and thus there are no inequality constraints. As in main text, we define \(\mathcal{E}\) by the following space \[\mathcal{E} := \bigcap_{l = 1, \ldots, L} \{M \mid D_{l} \cdot M = 0\}.\]
Moreover, assume \(\sum_{i} r_i = \sum_{j}c_j = 1\) and define \(\Delta_{n\times n}\) as the \(n^2\)-dimensional simplex.
Let \(a', t' = \argmax_{\tilde{a}, \tilde{t}}f(x + \tilde{t}\mathbf{1},y,\tilde{a})\), and let \(P'\) be the intermediate matrix corresponding to \((x + t'\mathbf{1}, y, a')\). Then, the claimed equivalence can be seen by proving the following equation:
\begin{equation}\label{eqn: Bregman WTS}
    P' = \argmin_{M \in \Delta_{n\times n} \cap \mathcal{E}}\KL{M}{P}
\end{equation}
where for two entry-wise non-negative matrices \(M, N\), the term \(\KL{M}{N}\) is defined by \(\KL{M}{N} = \sum_{ij}m_{ij}\left(\log(\frac{m_{ij}}{n_{ij}}) - 1\right)\). Suppose \(M = [m_{ij}]_{i,j = 1}^{n}\) achieves the optimality condition set in \eqref{eqn: Bregman WTS}. As it is a constrained optimization problem, the necessary condition for optimality is that there exists \(\mu\) and \(\lambda_{l}\) for \(l = 1, \ldots, L\), so that the following holds:
\[
\forall {i,j \in [n]}, \quad \partial_{m_{ij}}\KL{M}{P} = \eta\left(\sum_{l = 1}^{L}\lambda_{l}(D_{l})_{ij} + \mu\right).
\]
Utilizing the definition of \(\KL{\cdot}{\cdot}\), one rewrites the above equation as below:
\[
\log(m_{ij}) - \log(p_{ij}) = \eta\left(\sum_{l = 1}^{L}\lambda_{l}(D_{l})_{ij} + \mu\right).
\]
Thus, there exists \(\mu\) and \(\lambda_{l}\) for \(l = 1, \ldots, L\), for which one has \(M = \exp(\sum_{l}\lambda_{l}D_{m}+ \mu \mathbf{1}\mathbf{1}^{\top}) \odot P\), where \(\odot\) is the Hadamard product. Thus, one has
\begin{equation}\label{eqn: stationarity bregman}
    M_{ij} = \exp{\left(\eta(-C_{ij} + \sum_{l = 1}^{L}(a_{m} + \lambda_{l})\left(D_{l}\right)_{ij}+ x_i + y_{j} + \mu) - 1 \right)}.
\end{equation}

Furthermore, \(M\) satisfies the following normalization condition
\begin{equation}\label{eqn: feasibility 1}
\sum_{ij}M_{ij} = 1.
\end{equation}
Moreover, for \(l \in [L]\), the following holds:
\begin{equation}\label{eqn: feasibility 2}
M \cdot D_{l} = 0.
\end{equation}
As seen in the main text, \(P'\) satisfies \eqref{eqn: feasibility 1}, \eqref{eqn: feasibility 2}. Moreover, by setting \(\lambda_{l} = a'_m - a_m\) and \(\mu = t'\), one can show that \(P'\) also satisfies \eqref{eqn: stationarity bregman}. As \(f\) is concave and the equality constraints are affine, this shows that \(P'\) satisfies the optimality condition in that of the right-hand side of \eqref{eqn: Bregman WTS}. 

\section{Practical implementation of accelerated Sinkhorn-type algorithm under constraint}\label{appendix: SNS et al}
In this section, we detail the procedure to combine Algorithm \ref{alg:1} with entropy regularization scheduling and Sinkhorn-Newton-Sparse (SNS). As the detail of entropic regularization scheduling is presented in Section \ref{sec: alg}, we shall give implementation details of SNS in the constrained case. 
Similar to the construction in \citet{tang2024accelerating}, for a matrix \(M \in \R^{n\times n}_{\geq 0}\), we use \(\mathrm{Sparisfy}(M, \rho)\) to denote entry-wise truncation with a threshold \(\rho\). For \(\Tilde{M} := \mathrm{Sparisfy}(M, \rho)\), one has
\[
\Tilde{M}_{ij} = \begin{cases}
    M_{ij} & \text{if } M_{ij} \geq \rho,\\
    0 & \text{otherwise}.
\end{cases}
\]

Let \(P\) be the intermediate transport matrix formed by the current dual variable \((x, y, a)\). One can write down the Hessian term \(\nabla^2 f\) as follows:
\begin{equation}\label{eqn: Hessian formula}
\nabla^2 f(x, y, a) = -\eta
\begin{bmatrix}
\mathrm{diag}(P\1) & P & \nabla_{x}\nabla_{a}f \\
P^{\top} & \mathrm{diag}(P^{\top}\1) & \nabla_{y}\nabla_{a}f\\
\nabla_{a}\nabla_{x}f & \nabla_{a}\nabla_{y}f & \nabla_{a}^2f
\end{bmatrix}.
\end{equation}
As we assume \(K + L = O(1)\) in this work, the terms such as \(\nabla_{xy} \nabla_{a} f, \nabla_{a}^2 f\) can be kept without posing significant challenges to the Newton step, which allows us to use the approximation below:

\begin{equation}\label{eqn: sparsified Hessian formula}
\nabla^2 f(x, y, a) \approx H = -\eta\begin{bmatrix}
\mathrm{diag}(P\1) & \mathrm{Sparisfy}(P, \rho) & \nabla_{x}\nabla_{a}f \\
\mathrm{Sparisfy}(P^{\top}, \rho) & \mathrm{diag}(P^{\top}\1) & \nabla_{y}\nabla_{a}f\\
\nabla_{a}\nabla_{x}f & \nabla_{a}\nabla_{y}f & \nabla_{a}^2f
\end{bmatrix}.
\end{equation}

The value of \(\rho\) is a tunable parameter, and one sets \(\rho\) so that \(\mathrm{Sparisfy}(P, \rho)\) contains only \(O(n)\) nonzero entries. As a result, applying \(H^{-1}\) to a vector can be done with a \(O((n+K+L)^2) = O(n^2)\) complexity through the conjugate-gradient algorithm \citep{golub2013matrix}. Additionally, to ensure stability in the degenerate direction \((\delta x, \delta y, \delta a) = v:= (\mathbf{1}_n, -\mathbf{1}_n, \mathbf{0}_{K+L})\), we in practice use the following modified version of Lyapunov function:
\[
\tilde{f}(x, y, a) = f(x, y, a) - \frac{1}{2}(\sum_{i}x_{i} - \sum_{j}y_{j})^2.
\]
Same as \citet{tang2024accelerating}, one can see that the maximizer of \(\Tilde{f}\) is also a maximizer of \(f\). The final Sinkhorn-Newton-Sparse algorithm used is in Algorithm \ref{alg:SNS augmented}, where we include a Sinkhorn stage which uses Algorithm \ref{alg:1} as initialization, and a subsequent Newton stage which uses sparse Newton iteration to accelerate convergence.

\begin{algorithm}
\caption{Sinkhorn-Newton-Sparse (SNS) for OT under linear constraint}\label{alg:SNS augmented}
\begin{algorithmic}[1]
\Require $\Tilde{f}, x_{\mathrm{init}} \in \mathbb{R}^{n}, y_{\mathrm{init}} \in \mathbb{R}^{n}, a_{\mathrm{init}} \in \mathbb{R}^{K+L}, N_1, N_2, \rho, i = 0$
\State \texttt{\# Sinkhorn stage}

\State $v \gets \begin{bmatrix}
\1_{n}\\
-\1_{n}\\
\mathbf{0}_{K+L}
\end{bmatrix}$ \Comment{Initialize degenerate direction}

\State $(x,y, a) \gets (x_{\mathrm{init}}, y_{\mathrm{init}}, a_{\mathrm{init}}) $ \Comment{Initialize dual variable}
\While{$i < N$}
\State $i \gets i + 1$

\State \texttt{\# Row\&Column scaling step}
    \State \(P \gets \exp{\left(\eta(-C + \sum_m a_mD_m + x\1^{\top} + \1y^{\top}) - 1\right)}\)
    \State \( x \gets x + \left(\log(r) - \log(P\mathbf{1} ) \right)/\eta\) 
    \State \(P \gets \exp{\left(\eta(-C + \sum_m a_mD_m + x\1^{\top} + \1y^{\top}) - 1\right)}\)
    \State \( y \gets y + \left(\log(c) - \log(P^{\top}\mathbf{1} ) \right)/\eta\)
\State \texttt{\# Constraint dual update step}
\State \(a, t \gets \argmax_{\tilde{a}, \tilde{t}}f(x + \tilde{t} \mathbf{1},y,\tilde{a})\)
\State \(x \gets x + t\mathbf{1}\)
\EndWhile
\State \texttt{\# Newton stage}
\State $z \gets \mathrm{Proj}_{v^{\perp}}((x,y, a))$
\Comment{Project into non-degenerate direction of \(f\)}
\While{$i < N_1 + N_2$} 
\State \(P \gets \exp{\left(\eta(-C + \sum_m a_mD_m + x\1^{\top} + \1y^{\top}) - 1\right)}\)
\State \(H \gets -\eta\begin{bmatrix}
\mathrm{diag}(P\1) & \mathrm{Sparisfy}(P, \rho) & \nabla_{x}\nabla_{a}f \\
\mathrm{Sparisfy}(P^{\top}, \rho) & \mathrm{diag}(P^{\top}\1) & \nabla_{y}\nabla_{a}f\\
\nabla_{a}\nabla_{x}f & \nabla_{a}\nabla_{y}f & \nabla_{a}^2f
\end{bmatrix}\) \Comment{Sparse approximation of \(\nabla^{2}f\) with threshold \(\rho\).}
    \State \(H \gets H - vv^{\top}\) \Comment{Add regularization term corresponding to \(\Tilde{f}\).}
    \State $\Delta z \gets \text{Conjugate\_Gradient}(H, -\nabla \Tilde{f}(z))$ \Comment{Solve sparse linear system}
    \State $\alpha \gets \text{Line\_search}(\Tilde{f}, z, \Delta z) $ \Comment{Line search for step size}
    \State $z \gets z + \alpha  \Delta z$
    \State $i \gets i + 1$
\EndWhile
\State Output dual variables $(x,y,a) \gets z$.
\end{algorithmic}
\end{algorithm}

The combination of entropy regularization scheduling and SNS is described in Algorithm \ref{alg:SNS+annealing}. We remark that the iteration count \(N_{1, i}, N_{2, i}\) within Algorithm \ref{alg:SNS+annealing} are typically set to be much smaller than \(N_1, N_2\) in Algorithm \ref{alg:SNS augmented}, which is possible because the optimization task at an entropy regularization parameter \(\eta_{i}\) is initialized by the dual variables obtained for regularization parameter \(\eta_{i-1} = \eta_{i}/2\). 

As a special case, Algorithm \ref{alg:SNS+annealing} also provides substantial acceleration to the unconstrained entropic OT problem \citep{cuturi2013sinkhorn}.

\begin{algorithm}
\caption{Sinkhorn-Newton-Sparse with entropy regularization scheduling for OT under linear constraint}\label{alg:SNS+annealing}
\begin{algorithmic}[1]
\Require $x_{\mathrm{init}} \in \mathbb{R}^{n}, y_{\mathrm{init}} \in \mathbb{R}^{n}, a_{\mathrm{init}} \in \mathbb{R}^{K+L}, \rho$
\Require $\eta_{\mathrm{target}}, N_{\eta}= \ceil{\log_{2}(\eta_{\mathrm{target}})}, (N_{1, i})_{i=1}^{N_{\eta}}, (N_{2, i})_{i=1}^{N_{\eta}}, i = 1$

\State $(x,y, a) \gets (x_{\mathrm{init}}, y_{\mathrm{init}}, a_{\mathrm{init}}) $ \Comment{Initialize dual variable}
\State $\eta = 1$ \Comment{Initialize entropy regularization}
\While{$i \leq N_{\eta}$} 
\State Run Algorithm \ref{alg:SNS augmented} with entropy regularization set to \(\eta\) and initialized dual variables set to \((x, y, a)\), and \(N_1, N_2\) set to \(N_{1, i}, N_{2, i}\).
\State Save the output of previous step to $(x, y, a)$.
\State $i \gets i + 1$
\State $\eta = \min(2\eta, \eta_{\mathrm{target}})$ \Comment{Double entropy regularization term}
\EndWhile
\State Run the Newton stage of Algorithm \ref{alg:SNS augmented} at \(\eta = \eta_{\mathrm{target}}\) until the solution reaches convergence.
\State Output dual variables $(x,y,a)$.
\end{algorithmic}
\end{algorithm}

\section{Further details on numerical experiments }\label{appendix: empirical exponential convergence}
Despite the theoretical analysis in this work, it is an open problem as to whether the intermediate matrix \(P\) formed by the dual variable \((x,y,a)\) converges to \(P_{\eta}^{\star}\), which is the entropic optimal solution to \eqref{eqn: entropic constrained OT general form}. While the theoretical result is left for future work, we show that the numerical performance of Algorithm \ref{alg:1} lends strong support for rapid practical convergence. In Figure \ref{fig:tv_distance_linear_assignment} and Figure \ref{fig:tv_distance_dcg}, we show that both Algorithm \ref{alg:1} and Algorithm \ref{alg:SNS augmented} can converge to \(P_{\eta}^{\star}\) in the total variation (TV) distance. Furthermore, Algorithm \ref{alg:SNS augmented} provides machine-epsilon accurate solutions rapidly during the Newton stage. 

We further remark that the ranking example seems to suggest an exponential convergence result exists in practice for Algorithm \ref{alg:1} in this setting. After conducting repeated experiments in the ranking setting under different problem parameters, the numerical result obtained shows that exponential convergence is not peculiar to the experiment but is instead a persistent feature in the ranking case. The analysis in this special case might be of independent research interest for the field of information retrieval.
\begin{figure}[ht]
    \centering
    \includegraphics[width = 0.9\textwidth]{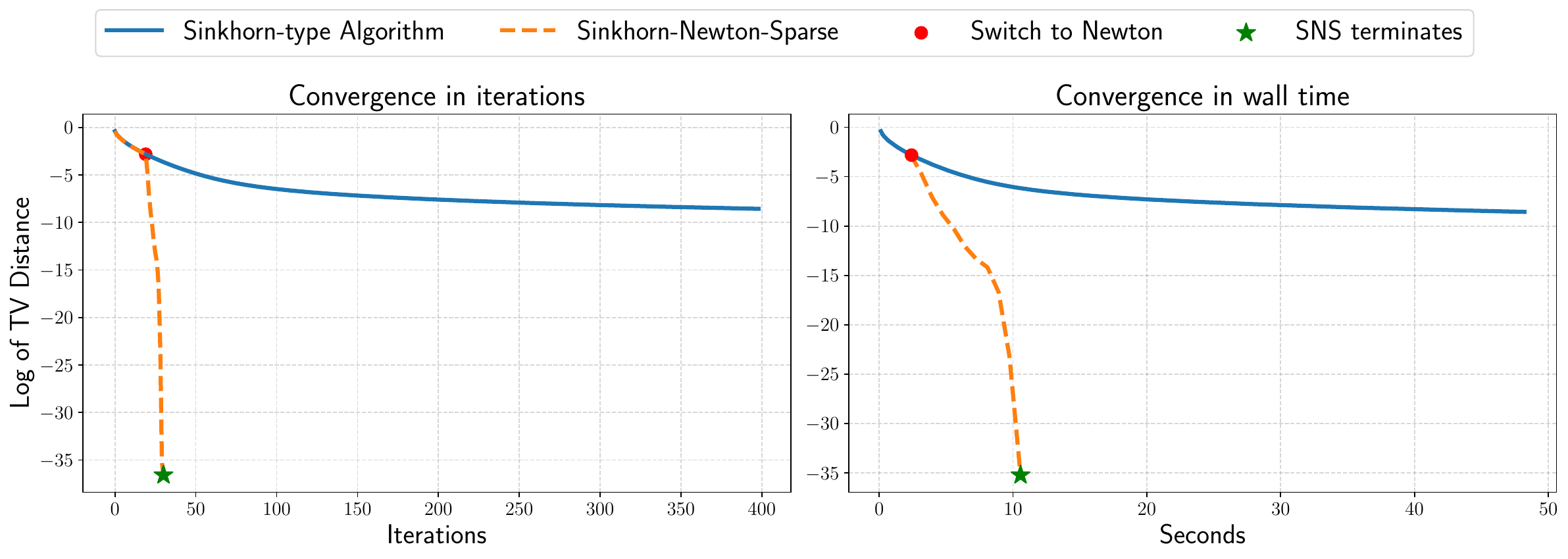}
    \caption{Random assignment problem. Plot of the proposed Sinkhorn-type algorithm in terms of TV distance to \(P_{\eta}^{\star}\). }
    \label{fig:tv_distance_linear_assignment}
    \centering
    \includegraphics[width = 0.9\textwidth]{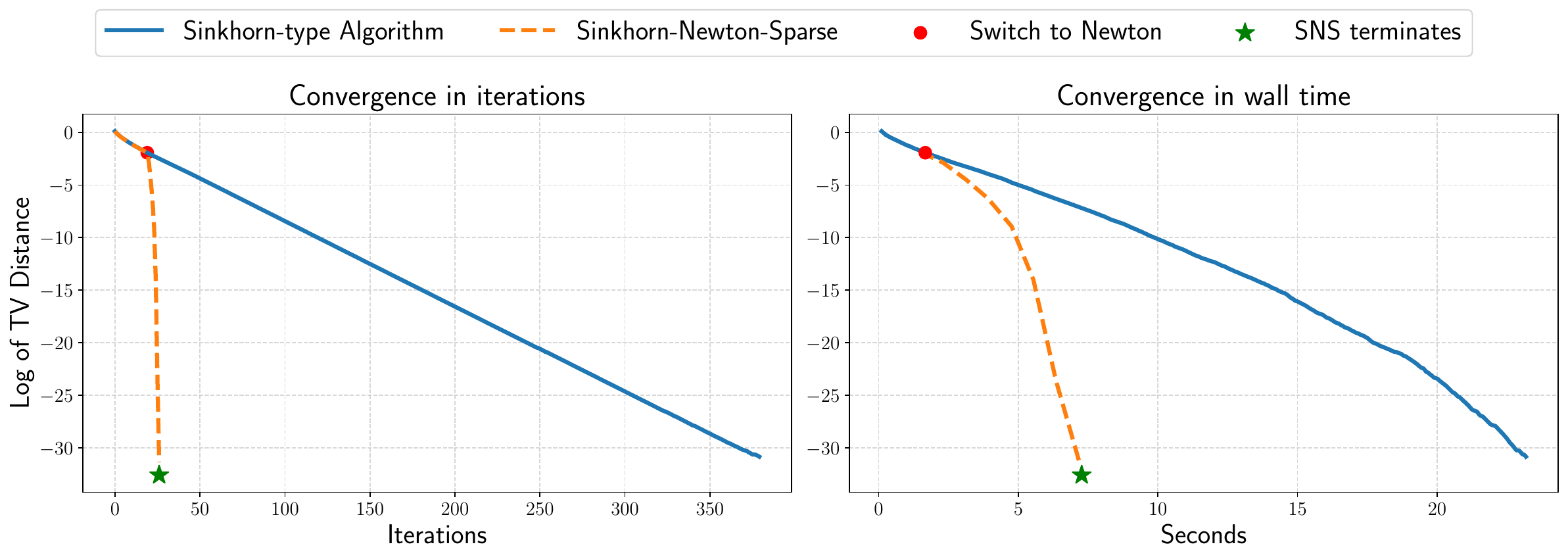}
    \caption{Ranking under constraints. Plot of the proposed Sinkhorn-type algorithm in terms of TV distance to \(P_{\eta}^{\star}\).}
    \label{fig:tv_distance_dcg}
\end{figure}

\end{document}